\newcommand{\SL}{{\text {\rm SL}}}
\newcommand{\R}{\mathbb{R}}
\newcommand{\Z}{\mathbb{Z}}
\newcommand{\C}{\mathbb{C}}
\newtheorem{thm}{Theorem}
\newtheorem{prop}[thm]{Proposition}
\newtheorem{lemma}[thm]{Lemma}
\newtheorem*{rem}{Remark}
\definecolor{darkgreen}{RGB}{0,96,0}
\begin{document}

\title{Modular Forms with Only Nonnegative Coefficients}
\author{Paul Jenkins and Jeremy Rouse}
\date{\today}

\maketitle

\begin{abstract}We study modular forms for $\SL_2(\Z)$ with no negative Fourier coefficients.  Let $A(k)$ be the positive integer where if the first $A(k)$ Fourier coefficients of a modular form of weight $k$ for $\SL_2(\Z)$ are nonnegative, then all of its Fourier coefficients are nonnegative, so that $A(k)$ can be interpreted as a ``nonnegativity Sturm bound''.  We give upper and lower bounds for $A(k)$, as well as an upper bound on the $n$th Fourier coefficient of any form with no negative Fourier coefficients.\end{abstract}

\section{Introduction}

Modular forms naturally arise as generating functions which count partitions, representations of an integer by a quadratic form, elements in class groups, or other arithmetically interesting objects.  If the coefficients of a modular form
\[
  f(z) = \sum_{n=0}^{\infty} a(n) q^{n}, \quad q = e^{2 \pi i  z}
\]
count something, then necessarily $a(n) \in \Z$ and $a(n) \geq 0$ for all $n$. The first condition may be checked using Sturm's theorem \cite{Sturm}, while the second condition is less straightforward to check. We study the problem
of determining when holomorphic modular forms for $\SL_{2}(\Z)$ have nonnegative Fourier coefficients.

Nonzero modular forms for $\SL_2(\Z)$ with nonnegative Fourier coefficients must have weight $k \equiv 0 \pmod{4}$.
Every modular form for $\SL_2(\Z)$ is a linear combination of an Eisenstein series and a cusp form, and every cusp form has both positive and negative Fourier coefficients.  Thus, any nonzero form $f(z)$ with no negative Fourier coefficients must have $a(0) > 0$, and the Fourier coefficients of $f(z)$ will be affected by those of the Eisenstein series \[E_{k}(z) = 1 - \frac{2k}{B_{k}} \sum_{n=1}^{\infty} \sigma_{k-1}(n) q^{n}.\]
For weights $k \equiv 2 \pmod{4}$, the Eisenstein coefficients are negative for $n > 0$ and asymptotically greater than the cusp form coefficients, so if $n$ is sufficiently large then $a(n) < 0$. Thus, any modular form of weight $k \equiv 2 \pmod{4}$ will have negative Fourier coefficients.

If $f(z) = \sum a(n) q^{n}$ is a modular form for $\SL_2(\Z)$, define $\mathcal{N}(f)$ to be the smallest integer $r$ for which $a(r) < 0$, if such an $r$ exists. For forms $f$ with nonnegative coefficients, define $\mathcal{N}(f) = \infty$. For nonnegative weights $k \equiv 0 \pmod{4}$, define $A(k) = \max \{ \mathcal{N}(f) : f \in M_{k} \text{ and } \mathcal{N}(f) < \infty \}$. In other words, 
for a modular form $f(z)$ of weight $k$, if each of the leading Fourier coefficients of $f(z)$ up through the $q^{A(k)}$ term is nonnegative,
then all of the Fourier coefficients of $f(z)$ are nonnegative, and $A(k)$ can be interpreted as a ``nonnegativity Sturm bound'' for modular forms of weight $k$ for $\SL_2(\Z)$.  We will show in Section~\ref{WellDefined} that such a maximum always exists, so $A(k)$ is well-defined.

The quantity $A(k)$ is connected with sign changes in Fourier expansions of cusp forms. In particular, suppose that $g(z) = \sum_{n=1}^{\infty} b(n) q^{n}$ is a weight $k$ cusp form normalized so that the leading coefficient is positive. Assume that for some positive integer $C_1$, we have $b(n) > 0$
for all $1 \leq n \leq C_1-1$ and $b(C_1) < 0$. Then for sufficiently small $\epsilon > 0$, $g(z) + \epsilon E_{k}(z)$ will be a weight $k$ modular form whose first $C_1-1$ Fourier coefficients are positive and for which
the coefficient of $q^{C_1}$ is negative. As a consequence, $A(k) \geq C_1$. In 2018, He and Zhao showed \cite{HeZhao} that if $g$ is a cusp form of weight $k$ and squarefree level $N$, there are positive integers $n_{1}, n_{2} \ll (k N)^{2 + \epsilon}$ with $b(n_{1}) b(n_{2}) < 0$. In 2023, Cho, Jin and Lim \cite{ChoJinLim} proved the bound $k^{6+\epsilon} N^{9+\epsilon}$ for general level.

For the rest of this paper, we assume $k \equiv 0 \pmod{4}$ and normalize a form $f(z)$ with only nonnegative coefficients to have the Fourier expansion
\[f(z) = 1 + \sum_{n=1}^\infty a(n) q^n. \]
The main result of this paper provides
lower and upper bounds on the quantity $A(k)$.
\begin{thm}
\label{Akbounds}
For any positive integer $k \equiv 0 \pmod{4}$ with $k \geq 12$, we have
\[
  \frac{(k-1)^{2}}{16 \pi^{2}} < A(k) \leq \frac{1}{7316} k^{4} (\log k + \log \log k)^{2}.
\]
\end{thm}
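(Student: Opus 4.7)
\emph{Lower bound.} By the observation in the introduction, $A(k) \geq C_1$ whenever there is a cusp form $g \in S_k$ with $b(n) \geq 0$ for $1 \leq n < C_1$ and $b(C_1) < 0$, since then $g + \epsilon E_k$ gives a form in $M_k$ with $\mathcal{N} = C_1$ for small $\epsilon > 0$. The plan is to construct such a cusp form with $C_1 > (k-1)^{2}/(16\pi^{2})$ using Poincar\'e series and exploiting the positivity of the Bessel function $J_{k-1}$. Specifically, the first positive zero $j_{k-1,1}$ of $J_{k-1}$ satisfies the classical inequality $j_{k-1,1} > k-1$, so $J_{k-1}(x) > 0$ on $(0, k-1)$, and the condition $4\pi\sqrt{n} < k-1$ is equivalent to $n < (k-1)^{2}/(16\pi^{2})$. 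The Fourier coefficients of the Poincar\'e series $P_m \in S_k$ are explicit Kloosterman-weighted sums of values $J_{k-1}(4\pi\sqrt{mn}/c)$, and for $m = 1$ with $n < (k-1)^{2}/(16\pi^{2})$ the dominant $c = 1$ term lies in the positive range of $J_{k-1}$. The main challenge here is to control the contributions from $c \geq 2$, whose Kloosterman sums have variable sign; this I would handle via Weil's bound combined with the rapid decay of $J_{k-1}$ at small arguments (possibly after passing to a suitable linear combination of Poincar\'e series in order to kill the variable-sign cross-terms).

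\emph{Upper bound.} Given $f = E_k + g \in M_k$ with $a(0) = 1$ and $g = \sum b(n) q^n \in S_k$ such that $\mathcal{N}(f) = N$, we have $b(N) < -\tfrac{2k}{|B_k|}\sigma_{k-1}(N) \leq -\tfrac{2k}{|B_k|} N^{k-1}$. The identity $\langle g, P_N \rangle = \tfrac{(k-2)!}{(4\pi N)^{k-1}} b(N)$ together with the explicit formula for $\|P_N\|^{2}$ and Cauchy-Schwarz yields
\[ |b(N)|^{2} \;\leq\; C\,\|g\|^{2} \cdot \frac{(4\pi N)^{k-1}}{(k-2)!} \]
for a bounded absolute constant $C$. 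Combining with the Eisenstein lower bound on $|b(N)|$, and applying Stirling's approximation to $(k-2)!$ and to $|B_k| \asymp 2\, k!\,(2\pi)^{-k}$, reduces the theorem to bounding $\|g\|$ as a function of $k$ using only the nonnegativity of $a(0), \ldots, a(N-1)$; the explicit constant $1/7316$ will then emerge from careful optimization of the Stirling constants.

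The main obstacle is this bound on $\|g\|$. My plan is to evaluate $g(iy) = f(iy) - E_k(iy)$ at $z = iy$ with $y \asymp (\log k)/k$: the nonnegativity of the first $N$ Fourier coefficients of $f$, combined with the exponential decay of later terms at this $y$ (provided $N$ is taken just beyond the claimed bound so as to force a contradiction), yields a pointwise estimate on $f(iy)$; the standard asymptotics of $E_k(iy)$ give a matching Eisenstein bound; together these produce a pointwise estimate on $|g(iy)|$ that, via the modular transformation and integration over the fundamental domain, controls $\|g\|^{2}$. The factor $(\log k + \log\log k)^{2}$ in the final answer arises precisely from this logarithmic choice of $y$ together with Stirling, and balancing the exponents in the resulting inequality will produce the stated explicit constant.
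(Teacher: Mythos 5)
Your plan here is essentially the paper's: the paper uses $P_1$, writes $b(n) = n^{(k-1)/2}[\delta_{n,1} + 2\pi\sum_{c\geq 1} K(1,n;c)c^{-1}J_{k-1}(4\pi\sqrt{n}/c)]$, and shows that for $4\pi\sqrt{n}\le k-1$ the $c=1$ term is positive and dominates the absolute sum of the $c\geq 2$ terms, using the Weil bound together with the small-argument Bessel estimate $|J_\nu(z)|\le |z/2|^\nu/\Gamma(\nu+1)$ and a lower bound for $J_{k-1}(k-1)$. Your hedge about ``passing to a suitable linear combination of Poincar\'e series to kill the variable-sign cross-terms'' is unnecessary: the direct estimate with $P_1$ alone works because for $c\ge 2$ the Bessel argument $4\pi\sqrt{n}/c \le (k-1)/2$ is deep in the region where $J_{k-1}$ is exponentially small, so Weil-type bounds with no cancellation suffice.

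\textbf{Upper bound.} Here your route is genuinely different from the paper's, and it has two real gaps.

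First, the inequality $|b(N)|^2 \le C\,\|g\|^2 (4\pi N)^{k-1}/(k-2)!$ has $C = p_N(N)$, the $N$th Fourier coefficient of $P_N$, since $\|P_N\|^2 = \frac{(k-2)!}{(4\pi N)^{k-1}}p_N(N)$. You assert $C$ is a ``bounded absolute constant,'' but that is only known for $N$ small relative to $k$, where the Bessel functions $J_{k-1}(4\pi N/c)$ are exponentially small. You need the estimate uniformly for $N$ up to $k^4(\log k + \log\log k)^2$, where $4\pi N/c \gg k$ for small $c$ and $J_{k-1}$ is in its oscillatory range; there Weil's bound (which, for $c\mid N$, allows $|K(N,N;c)|$ as large as $d(c)c$) gives no cancellation, and bounding $p_N(N)$ is nontrivial and not at all an absolute constant a priori. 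This step needs a real argument or a different normalization.

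Second, the proposed bound on $\|g\|$ does not follow from what you sketch. You compute $g(iy) = f(iy) - E_k(iy)$ at a single value $y \asymp (\log k)/k$ and then say this ``via the modular transformation and integration over the fundamental domain controls $\|g\|^2$.'' A bound on $|g(z_0)|$ at one point does not bound $\int_{\mathcal{F}}|g|^2 y^{k-2}\,dx\,dy$; you would need a pointwise bound on $y^{k/2}|g(z)|$ over all of $\mathcal{F}$, or some relation between the Petersson sup-norm and the $L^2$-norm, and neither is supplied. By contrast, the paper sidesteps Petersson norms entirely: it evaluates the transformation law at $y = \tfrac{k}{2\pi}(\log k + \log\log k)$ to get
\[
 1 + \sum_{n\ge 1} a(n)\bigl(e^{-2\pi n/y} - y^k e^{-2\pi n y}\bigr) = y^k,
\]
with every bracket positive at this $y$; this bounds $C = \sum_{n\le \ell}a(n)$ directly. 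It then invokes Theorem~\ref{cuspformbound} (the main result of \cite{JenkinsRouse}), which bounds every cusp-form Fourier coefficient explicitly in terms of $c(1),\dots,c(\ell)$, and feeds the bound on $C$ into that. That result is where the heavy analytic lifting lives; without it, your plan would need to rebuild an analogous uniform estimate, which is precisely where your sketch is incomplete.
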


To prove the lower bound in Theorem~\ref{Akbounds} we show that there is
a nonzero cusp form, namely the Poincar\'e series $P_{1}(z)$, whose
first sign change occurs after $\lfloor \frac{(k-1)^{2}}{16 \pi^{2}}\rfloor$ terms in its Fourier expansion. This shows
that the result of He and Zhao mentioned above is optimal in the weight aspect.

The table below shows the lower bound $L(k) := \lceil (k-1)^{2}/(16 \pi^{2}) \rceil$,
the true value of $A(k)$, and the upper bound $U(k) :=
\lfloor (k^{4} (\log k + \log \log k)^{2})/7316 \rfloor$ for the twenty smallest nontrivial weights $12 \leq k \leq 88$.  For $k$ in this range, $A(k)$ appears to be very close to the lower bound, and we conjecture that $A(k) = O(k^2)$.

\begin{table}
\begin{center}
\caption{Bounds from Theorem~\ref{Akbounds} and the true size of $A(k)$ for $12 \leq k \leq 88$}
\label{table1}
\begin{tabular}{cccc|cccc}
$k$ & $L(k)$ & $A(k)$ & $U(k)$ & $k$ & $L(k)$ & $A(k)$ & $U(k)$\\
\hline
$12$ & $1$ & $2$ & $32$ & $52$ & $17$ & $22$ & $28341$\\
$16$ & $2$ & $3$ & $128$ & $56$ & $20$ & $26$ & $39459$\\
$20$ & $3$ & $4$ & $366$ & $60$ & $23$ & $29$ & $53663$\\
$24$ & $4$ & $6$ & $851$ & $64$ & $26$ & $33$ & $71508$\\
$28$ & $5$ & $8$ & $1728$ & $68$ & $29$ & $36$ & $93600$\\
$32$ & $7$ & $10$ & $3177$ & $72$ & $32$ & $41$ & $120598$\\
$36$ & $8$ & $12$ & $5422$ & $76$ & $36$ & $45$ & $153217$\\
$40$ & $10$ & $14$ & $8727$ & $80$ & $40$ & $49$ & $192226$\\
$44$ & $12$ & $16$ & $13403$ & $84$ & $44$ & $54$ & $238450$\\
$48$ & $14$ & $19$ & $19807$ & $88$ & $48$ & $59$ & $292773$
\end{tabular}
\end{center}
\end{table}

To prove the upper bound, we use the modularity relation $f\left(-\frac{1}{z}\right) = z^{k} f(z)$, which implies that if $y > 0$ is a real number, then
\[
  1 + \sum_{n=1}^{\infty} a(n) \left(e^{-2 \pi n / y} - y^{k} e^{-2\pi n y}\right)
  = y^{k}.
\]
For $y \approx \frac{k}{2 \pi} \log k$, the expression $e^{-2 \pi n / y} - y^{k} e^{-2 \pi n y}$ is positive for all positive integers $n$. Using this expression, it is possible to bound $a(n)$ for relatively small values of $n$. Next, we rely on the main result of \cite{JenkinsRouse}, which gives an explicit bound on all the Fourier coefficients of a weight $k$ cusp form in terms of the coefficients of $q^{1}$, $\ldots$, $q^{\ell}$, where $\ell$ is the dimension of the space of cusp forms of weight $k$.
Combining these results yields the upper bound on $A(k)$.
This argument also produces explicit bounds on $a(n)$ for $n \leq \ell$
under the assumption that all of the Fourier coefficients are nonnegative.
The main result of \cite{JenkinsRouse} then allows us to give a nontrivial \emph{upper} bound on the Fourier coefficients of a modular form under the assumption that its coefficients are nonnegative.
\begin{thm}
\label{upperbound}
Suppose that $k \geq 16$ and $f(z) = 1 + \sum_{n=1}^{\infty} a(n) q^{n} \in M_{k}$ and $a(n) \geq 0$ for all positive integers $n$. Then
\[
  a(n) \leq -\frac{2k}{B_{k}} \sigma_{k-1}(n) + 1.07 \cdot 10^{-10}
  \left(\frac{k}{2 \pi}\right)^{k} (\log k)^{1.5k} d(n) n^{\frac{k-1}{2}}.
\]
\end{thm}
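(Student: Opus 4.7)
My plan is to follow the roadmap sketched in the introduction. Since $a(0) = 1 = E_k(0)$, I write $f = E_k + g$ where $g(z) = \sum_{n \geq 1} b(n) q^n$ is a cusp form; then $a(n) = -\frac{2k}{B_k} \sigma_{k-1}(n) + b(n)$, so the theorem is equivalent to the cusp form bound
\[
  b(n) \leq 1.07 \cdot 10^{-10} \left( \frac{k}{2\pi}\right)^k (\log k)^{1.5k} d(n) n^{(k-1)/2}.
\]

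The first stage is to bound $a(n)$ for $n$ in a small range $1 \leq n \leq \ell$, using the modularity identity
\[
  1 + \sum_{n=1}^\infty a(n) \left( e^{-2\pi n/y} - y^k e^{-2\pi n y} \right) = y^k
\]
recorded in the introduction. I would choose $y$ of roughly the size $\frac{k}{2\pi} \log k$ (with a mild adjustment) and verify that the differences $e^{-2\pi n/y} - y^k e^{-2\pi n y}$ are strictly positive for every $n$ in the relevant range. Once positivity is established, the hypothesis $a(n) \geq 0$ implies that every term in the sum is nonnegative, so each summand is at most $y^k$, giving
\[
  a(n) \leq \frac{y^k}{e^{-2\pi n/y} - y^k e^{-2\pi n y}}.
\]
Adding $\frac{2k}{B_k} \sigma_{k-1}(n)$ to both sides translates this into an explicit upper bound on $b(n)$ for $n \leq \ell$.

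The second stage invokes the main result of \cite{JenkinsRouse}, which bounds every Fourier coefficient of a weight $k$ cusp form in terms of its first $\ell$ coefficients, with the expected Deligne-type factor $d(n) n^{(k-1)/2}$. Substituting the explicit estimates for $b(1), \ldots, b(\ell)$ from the first stage and then optimizing the choice of $y$ and the cutoff $\ell$ should produce a bound of the form stated. The factor $(k/(2\pi))^k$ arises directly from $y^k$; the $(\log k)^{1.5k}$ should combine the $(\log k)^k$ contribution coming from $y^k$ at $y \sim \frac{k}{2\pi} \log k$ with an additional $(\log k)^{k/2}$ factor coming from the weight dependence in the \cite{JenkinsRouse} coefficient bound.

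The main obstacle, I expect, is calibrating the choices tightly enough to yield the very specific constant $1.07 \cdot 10^{-10}$ and the precise exponent $1.5k$. The positivity condition $e^{-2\pi n/y} > y^k e^{-2\pi n y}$ at $n = 1$ is only borderline when $y = \frac{k}{2\pi} \log k$, so the actual choice of $y$ needs a small fudge factor, and tracking how this propagates through the \cite{JenkinsRouse} estimate together with the usual asymptotic bounds on $\sigma_{k-1}(n)$ and $|B_k|$ (which presumably explain the mild hypothesis $k \geq 16$) requires careful but essentially routine bookkeeping.
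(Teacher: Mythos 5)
Your roadmap is essentially the one the paper follows: write $f = E_k + g$ with $g$ a cusp form, use the modularity relation with $y \approx \frac{k}{2\pi}\log k$ to bound coefficients in the initial range $1 \leq n \leq \ell$, then feed those into the explicit cusp-form bound of Theorem~\ref{cuspformbound}. However, there are a few places where your sketch diverges from, or falls short of, what is needed.

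First, bounding each $a(n)$ pointwise by
\[
  a(n) \leq \frac{y^{k}}{e^{-2\pi n/y} - y^{k} e^{-2\pi n y}}
\]
is wasteful. The quantity that actually enters the Jenkins--Rouse bound (via Lemma~\ref{smallC}) is the \emph{sum} $\sum_{m=1}^{\ell} |c(m)|$, which is controlled by $C := \sum_{n=1}^{\ell} a(n)$. Since every term in $\sum_{n=1}^{\infty} a(n)(e^{-2\pi n/y} - y^k e^{-2\pi n y}) = y^k - 1$ is nonnegative, the \emph{partial} sum up to $\ell$ is also $\leq y^k - 1$, and since each factor $e^{-2\pi n/y} - y^k e^{-2\pi n y}$ for $n \leq \ell$ has a uniform lower bound (the paper gets $\geq e^{-0.59} - (0.213)^k \geq 0.554$ for $k \geq 64$), one immediately gets $C \leq (y^k - 1)/0.554$. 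Your pointwise bound yields only $C \leq \ell \cdot 1.8 y^k$, losing a factor of $\ell \sim k/12$ that propagates into the final constant; with your argument you would prove a correct statement but not with the numerical constant $1.07 \cdot 10^{-10}$.

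Second, the uniform analysis only works cleanly for $k$ large (the paper requires $k \geq 64$). For $16 \leq k < 64$ the paper falls back on the explicit per-weight computations of Section~\ref{smallk} and Table~\ref{table2}, where the bound $C_2$ on the cusp contribution is checked numerically to be smaller than what the theorem asserts. Your proposal does not address this, so as written it does not establish the full claimed range $k \geq 16$. Finally, your account of the exponent $1.5k$ is slightly off: the Jenkins--Rouse bound contributes only a $\sqrt{\log k}$ factor, not $(\log k)^{k/2}$. The exponent $1.5k$ is simply a convenient overestimate of $(\log k + \log\log k)^k$ times the lower-order factors; the excess $(\log k)^{k/2}$ is what makes the ratio a decreasing function of $k$, allowing the constant to be read off at $k = 64$.
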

A similar result with a larger constant is true in the case that $k = 12$. In particular, we show in Section~\ref{smallk} that if $f(z) = 1 + \sum_{n=1}^{\infty} a(n) q^{n} \in M_{12}$ has nonnegative coefficients, then
\[
  a(n) \leq -\frac{24}{B_{12}} \sigma_{11}(n) + 8096 d(n) n^{\frac{11}{2}}.
\]

For a form $f(z) = 1 + \sum_{n=1}^{\infty} a(n) q^{n}$, the Eisenstein contribution to $a(n)$ will dominate when $n$ is sufficiently large. Our next result quantifies when this occurs, under the assumption that the first few Fourier coefficients are nonnegative.
\begin{thm}
\label{positivity}
Assume that $f(z) = 1 + \sum_{n=1}^{\infty} a(n) q^{n} \in M_{k}$ satisfies
\[
  a(n) \geq 0 \text{ for all } 1 \leq n \leq k^{2} (\log k + \log \log k)^{2}.
\]
Then
\[
a(n) > 0 \text{ for all } n \geq \frac{1}{7316} k^{4} (\log k + \log \log k)^{2}.
\]
\end{thm}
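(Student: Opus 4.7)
The decomposition $f = E_k + g$ with $g = \sum_{n\ge 1} b(n) q^n \in S_k$ gives
\[
  a(n) = -\frac{2k}{B_k}\sigma_{k-1}(n) + b(n),
\]
and since $k \equiv 0 \pmod{4}$ the Eisenstein coefficient $-2k/B_k$ is positive. My plan is to show that the Eisenstein piece, which satisfies $-\frac{2k}{B_k}\sigma_{k-1}(n) \ge -\frac{2k}{B_k} n^{k-1}$, dominates $|b(n)|$ above the stated threshold, by extracting an explicit bound on $|b(n)|$ from the positivity hypothesis and then propagating it to all $n$ via the main theorem of \cite{JenkinsRouse}.

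To bound $b(m)$ for small $m$, I would evaluate the modularity identity
\[
  1 + \sum_{n=1}^{\infty} a(n)\bigl(e^{-2\pi n/y} - y^{k} e^{-2\pi n y}\bigr) = y^{k}
\]
at $y = \tfrac{k}{2\pi}(\log k + \log\log k)$ (or a minor variant chosen so that every factor $e^{-2\pi n/y} - y^{k} e^{-2\pi n y}$ is positive for $n \ge 1$, as in the sketch after Theorem~\ref{Akbounds}). By hypothesis, every summand with $n \le \ell := k^{2}(\log k + \log\log k)^{2}$ is nonnegative. The tail $n > \ell$ is negligible: the trivial Hecke-type bound $|a(n)| = O(n^{k-1})$ together with $2\pi\ell/y = 4\pi^{2} k(\log k + \log\log k)$ forces the remaining sum to be exponentially small compared to $y^{k}$. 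Isolating a single term then yields an upper bound of order $y^{k}$ on $a(m)$ for $m$ up to roughly $k$. Subtracting the Eisenstein contribution converts this into an upper bound on $b(m)$, and combined with the complementary lower bound $b(m) \ge \frac{2k}{B_k}\sigma_{k-1}(m)$ (which follows from $a(m) \ge 0$), we obtain an explicit two-sided estimate on $b(m)$ for all $m$ up to $\dim S_k$.

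Feeding these estimates into \cite{JenkinsRouse} produces an explicit bound $|b(n)| \le C \cdot d(n) n^{(k-1)/2}$ valid for every $n \ge 1$, with $C$ of the same shape as the constant $1.07\cdot 10^{-10}(k/2\pi)^{k}(\log k)^{1.5k}$ appearing in Theorem~\ref{upperbound}. Strict positivity $a(n) > 0$ then follows once
\[
  -\frac{2k}{B_k} n^{k-1} > C\cdot d(n) n^{(k-1)/2},
\]
i.e.\ $n^{(k-1)/2} > C\cdot d(n)\cdot |B_k|/(2k)$. Using $|B_k| \sim 2k!/(2\pi)^{k}$, Stirling, and a divisor bound $d(n) \ll n^{\varepsilon}$, raising to the power $2/(k-1)$ gives a threshold of the form $\tfrac{1}{7316}k^{4}(\log k + \log\log k)^{2}$. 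The main obstacle is bookkeeping rather than a new idea: the specific constant $\tfrac{1}{7316}$ arises from tightly balancing the modularity estimate, the Jenkins--Rouse constant, and the asymptotic for $|B_k|$, so the work lies in keeping each inequality sharp enough to land on this explicit value, and in confirming that the tail estimate in the modularity identity really is negligible given only the hypothesized positivity range.
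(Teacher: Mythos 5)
Your outline captures the main ingredients the paper uses---the modularity identity $1+\sum a(n)(e^{-2\pi n/y}-y^k e^{-2\pi ny})=y^k$ at $y=\frac{k}{2\pi}(\log k+\log\log k)$, exponential tail decay, the Jenkins--Rouse coefficient bound, and Eisenstein dominance for large $n$---so the strategy is right. But there is a genuine gap in your tail estimate, and a missing case analysis.

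\textbf{The tail bound is circular as stated.} You assert that the tail over $n > \ell := k^{2}(\log k+\log\log k)^{2}$ is negligible ``by the trivial Hecke-type bound $|a(n)|=O(n^{k-1})$.'' For $f=E_k+g$ with $g\in S_k$, the cuspidal part satisfies $|b(n)|\ll_g d(n)n^{(k-1)/2}$ with a constant depending on $g$, so the implied constant in $|a(n)|=O(n^{k-1})$ is \emph{not} uniform over $M_k$. Controlling that constant requires controlling $b(1),\dots,b(\ell)$, i.e.\ exactly the quantity $C=\sum_{m\le \ell}a(m)$ you are trying to bound via this very tail estimate. A form with enormous early coefficients has an enormous cuspidal tail, and the inequality you want does not hold a priori. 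The paper breaks this circle deliberately: Lemma~\ref{smallC} bounds $|c(n)|\le 12(C+1.5^k)\sqrt{\log k}\,d(n)n^{(k-1)/2}$ in terms of $C$, and Proposition~\ref{boundonC} then argues by contradiction---if $C>1.806\,y^k$, the tail beyond $s=\lceil k^2(\log k+\log\log k)^2\rceil$ is bounded by roughly $10^{-4213}C$ (so it \emph{scales with} $C$), while $\sum_{n\le \ell}a(n)(e^{-2\pi n/y}-y^ke^{-2\pi ny})\ge 0.554\,C > y^k$, which forces the sum over $\ell<n\le s$ to be strictly negative and hence produces a negative coefficient in that window. The contrapositive is what yields $C\le 1.806\,y^k$, and only then can Lemma~\ref{smallC} and Theorem~\ref{cuspformbound} be applied to get a clean $|b(n)|\le C\cdot d(n)n^{(k-1)/2}$-type bound for all $n$. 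Without this bootstrap, ``isolating a single term'' in the modularity identity does not give a bound on $a(m)$.

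\textbf{The small-weight case is not covered.} The asymptotic inequalities used above (including $\zeta(k-1)$ estimates, $k\ge 64$ in Lemma~\ref{smallC} and Proposition~\ref{boundonC}, and $k\ge 92$ in Theorem~\ref{92positivity}) are not tight enough for small $k$. The paper proves Theorem~\ref{positivity} for $12\le k\le 88$ by a separate argument: for these weights it checks that $t<k^2(\log k+\log\log k)^2$, so the hypotheses guarantee $a(1),\dots,a(\ell),a(t)\ge 0$, which is all that the derivation of inequality \eqref{C2bound} uses, and one then verifies $B(k)<\frac{1}{7316}k^4(\log k+\log\log k)^2$ directly from Table~\ref{table2}; weight $12$ is handled separately. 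Your argument, run asymptotically, would not land on the constant $\frac{1}{7316}$ across this range.
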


If the modular form $f(z) = 1 + \sum_{n=1}^{\infty} a(n) q^{n}$ is a generating function, the result above addresses the question of how large $n$ must be in order to guarantee that $a(n) > 0$. In particular, if $Q(\vec{x}) = \frac{1}{2} \vec{x}^{T} A \vec{x}$ for a positive-definite matrix $A \in {\rm GL}_{2k}(\Z)$ for which $A$ and $A^{-1}$ both have even diagonal entries, then
\[ \theta_{Q}(z) = \sum_{\vec{x} \in \Z^{2k}} q^{Q(\vec{x})}
\]
is a weight $k$ modular form for $\SL_{2}(\Z)$ with nonnegative Fourier coefficients and Theorem~\ref{positivity} shows that $Q$ represents
every positive integer $n \geq \frac{1}{7316} k^{4} (\log k + \log \log k)^{2}$.

\begin{rem}
Theorem~\ref{positivity} applies to any modular form of weight $k$. There are more specialized arguments that apply to a theta series that should yield much better bounds.
\end{rem}

It is natural to study analogous questions for modular forms of level $N > 1$. For example, the space $M_{2}(\Gamma_{0}(4))$ is spanned by
\begin{align*}
  F_{1}(z) = 2 E_{2}(2z) - E_{2}(z) &= 1 + 24 \sum_{n=1}^{\infty} \left(\sum_{\substack{d \mid n \\ d~\text{odd}}} d\right) q^{n}, \text{ and } \\\textbf{}
  F_{2}(z) = \frac{\eta^{8}(4z)}{\eta^{4}(2z)} &= \sum_{n~\text{odd}} \sigma(n) q^{n}.
\end{align*}
A modular form $f(z) = c_{1} F_{1}(z) + c_{2} F_{2}(z)$ in $M_{2}(\Gamma_{0}(4))$ has nonnegative coefficients and leading coefficient $1$ if and only if
$c_{1} = 1$ and $c_{2} \geq 0$. For this reason, no analogue of Theorem~\ref{upperbound} is true in this instance.

A greater challenge occurs in the space $M_{2}(\Gamma_{0}(22))$. We have
$F_{1}(z) = 2E_{2}(2z) - E_{2}(z) \in M_{2}(\Gamma_{0}(22))$ and
$G(z) = \eta^{2}(z) \eta^{2}(11z) \in M_{2}(\Gamma_{0}(22)) = \sum_{n=1}^{\infty} c(n) q^{n}$. This latter form is a Hecke eigenform
and so $|c(n)| \leq d(n) \sqrt{n} \leq 2n$.
For any $\epsilon > 0$, the form $F_{1}(z) + \epsilon G(z)$ has leading coefficient $1$, and the bound above implies that if the $n$th coefficient is negative, then $n > 12/\epsilon$. However, knowing that $c(1) = 1$, $c(2) = -2$
and $c(2^{n}) = -2c(2^{n-1}) - 2c(2^{n-2})$ implies that
\[
  c(2^{n}) = \begin{cases}
    2^{n/2} & \text{ if } n \equiv 0, 2 \pmod{8}\\
    -2^{(n+1)/2} & \text{ if } n \equiv 1 \pmod{8}\\
    0 & \text{ if } n \equiv 3,7 \pmod{8}\\
    -2^{n/2} & \text{ if } n \equiv 4, 6 \pmod{8}\\
    2^{(n+1)/2} & \text{ if } n \equiv 5 \pmod{8}.\\
    \end{cases}
\]
Since the coefficient of $q^{2^{n}}$ in $F_{1}(z)$ is $24$ for all $n$,
this implies that for any $\epsilon > 0$ there is a positive integer $n$
so that the coefficient of $q^{2^{n}}$ in $F_{1}(z) + \epsilon G(z)$ is negative. In particular, for any $\epsilon > 0$, the quantity $\mathcal{N}(F_{1}(z) + \epsilon G(z))$ is finite, but larger than $12/\epsilon$. Hence there is no analogue of Theorem~\ref{Akbounds}, and no ``nonnegativity Sturm bound'' for $M_{2}(\Gamma_{0}(22))$.

While there are challenges generalizing the results above to higher
level, there are potential applications to theta series of more general quadratic forms, generating functions whose coefficients are sums of class numbers, and various partition generating functions.

The remainder of the paper proceeds as follows.  In Section~\ref{back}, we review relevant background. In Section~\ref{smallk}, we explicitly compute $A(k)$ for $12 \leq k \leq 88$, and in Section~\ref{WellDefined} we show that $A(k)$ is well-defined. In Section~\ref{lowerbound}, we prove the lower bound in Theorem~\ref{Akbounds}, and in Section~\ref{finalsec}, we prove the upper bound in Theorem~\ref{Akbounds} and prove Theorems~\ref{positivity} and~\ref{upperbound}.

\vspace{.15in}
\noindent\textbf{Acknowledgements.} The authors thank Nick Andersen, Pace Nielsen, Martin Raum, and the anonymous referees for helpful conversations and feedback.

\vspace{.15in}
\noindent\textbf{Data Availability Statement.} The authors declare that the data relevant to this work is contained in this paper, namely in Table~\ref{table1} and Table~\ref{table2}.

\section{Background}
\label{back}

In this section, we recall definitions and previously known theorems that will be used later in the paper.

We let $M_{k}$ denote, as usual, the $\C$-vector space of modular forms of weight $k$ for $\SL_{2}(\Z)$, and let $S_{k}$ denote the subspace of cusp forms. The usual weight $k$ Eisenstein series is
\[
  E_{k}(z) = 1 - \frac{2k}{B_{k}} \sum_{n=1}^{\infty} \sigma_{k-1}(n) q^{n},
\]
where $B_{k}$ is the $k$th Bernoulli number and $\sigma_{k-1}(n)$ is the sum of the $(k-1)$st powers of the divisors of $n$. If $f(z) = \sum_{n=0}^{\infty} a(n) q^{n}$, then $f(z) - a(0) E_{k}(z) \in S_{k}$.

We note that when $k \equiv 0 \pmod{4}$,
\[
  -\frac{2k}{B_{k}} = \frac{(2 \pi)^{k}}{(k-1)! \zeta(k)}.
\]
Because of the presence of the factorial in the denominator, we will require lower bounds on $n!$. In \cite{Robbins}, Robbins shows that for all $n \geq 1$, we have $\sqrt{2 \pi n} \left(\frac{n}{e}\right)^{n} e^{\frac{1}{12n+1}} < n! < \sqrt{2 \pi n} \left(\frac{n}{e}\right)^{n} e^{\frac{1}{12n}}$, which we will use in the simplified form
\begin{equation}
\label{facbound}
  \sqrt{2 \pi n} \left(\frac{n}{e}\right)^{n} < n! < \sqrt{2 \pi n} \left(\frac{n}{e}\right)^{n} e^{\frac{1}{12n}}.
\end{equation}

We will make use of the standard modular forms
\[
  \Delta(z) = q \prod_{n=1}^{\infty} (1-q^{n})^{24} = \sum_{n=1}^{\infty} \tau(n) q^{n} \in S_{12}
\]
and
\[
  j(z) = \frac{E_{4}(z)^{3}}{\Delta(z)} = q^{-1} + 744 + 196884q + \cdots.
\]

If $k$ is an even positive integer, the Poincar\'e series
of index $m \geq 1$ (see \cite[p. 51]{Iwaniec}) has Fourier expansion
\begin{equation}
\label{poincare}
  P_{m}(z)
  = \sum_{n=1}^{\infty}
  \left(\frac{n}{m}\right)^{\frac{k-1}{2}}
  \left(\delta_{mn} + 2 \pi i^{-k} \sum_{c=1}^{\infty}
  \frac{K(m,n;c)}{c} J_{k-1}\left(\frac{4 \pi \sqrt{mn}}{c}\right)\right) q^{n}.
\end{equation}
Here $J_{k-1}$ is a Bessel function of the first kind, $K(m,n;c)$ is the usual Kloosterman sum
\[
  K(m,n;c) = \sum_{x \in (\Z/c\Z)^{\times}}
  e^{2 \pi i \cdot \frac{mx+nx^{-1}}{c}}, \quad \text{ and } \quad \delta_{mn} = \begin{cases} 1 & \text{ if } m = n,\\ 0 & \text{ otherwise. } \end{cases}
\]
For all $m \geq 1$, $P_{m}(z) \in S_{k}$.

Next, we state the main result of \cite{JenkinsRouse} which gives a bound on all of the Fourier coefficients of a cusp form of weight $k$ in terms of the first $\ell$ coefficients, where $\ell = \lfloor \frac{k}{12}\rfloor$. We use $d(n)$ to denote the number of divisors of the positive integer $n$.

\begin{thm}
\label{cuspformbound}
Suppose that $\sum_{n=1}^{\infty} c(n) q^{n} \in S_{k}$. Then
\[
  |c(n)| \leq \sqrt{\log k} \left(11 \cdot \sqrt{\sum_{m=1}^{\ell} \frac{|c(m)|^{2}}{m^{k-1}}} + \frac{e^{18.72} (41.41)^{k/2}}{k^{(k-1)/2}} \cdot
  \left| \sum_{m=1}^{\ell} c(m) e^{-7.288m} \right|\right) \cdot d(n) n^{\frac{k-1}{2}}.
\]
\end{thm}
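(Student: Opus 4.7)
The plan is to combine the Petersson trace formula with a careful estimate of the Petersson norm of $f$ in terms of its initial Fourier coefficients. Fix an orthonormal basis $\{f_j\}$ of $S_k$ with respect to the Petersson inner product and expand $f = \sum_j \langle f, f_j \rangle f_j$, so that $c(n) = \sum_j \langle f, f_j\rangle c_{f_j}(n)$. Applying Cauchy--Schwarz and the Petersson trace formula gives
\[
|c(n)|^2 \leq \|f\|^2 \sum_j |c_{f_j}(n)|^2 = \|f\|^2 \cdot \frac{(4\pi n)^{k-1}}{\Gamma(k-1)}\left[1 + 2\pi i^{-k}\sum_{c\geq 1}\frac{K(n,n;c)}{c}J_{k-1}\left(\frac{4\pi n}{c}\right)\right].
\]
The Weil bound $|K(n,n;c)| \leq d(c)\gcd(n,c)^{1/2} c^{1/2}$, combined with the elementary estimate $|J_{k-1}(x)| \leq (x/2)^{k-1}/\Gamma(k)$ and the factorial bound~\eqref{facbound}, controls the Kloosterman--Bessel tail and yields an inequality of the form $|c(n)| \leq C_1(k)\,\|f\|\, d(n)\, n^{(k-1)/2}$, with $C_1(k)$ growing only like a polynomial in $\log k$.

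The technical heart of the argument is then to bound $\|f\|$ in terms of $c(1),\ldots,c(\ell)$. For this I would use an integral representation of the Petersson norm and split the integral into two regimes: one where the truncation $\sum_{m \leq \ell} c(m) q^m$ already approximates $f$ well, and one where the modular transformation $f(-1/z) = z^k f(z)$ can be used to pull the region back into the first regime. The two terms inside the parentheses on the right-hand side of the theorem correspond to two complementary estimates arising from this split: the $\ell^2$ piece $\sqrt{\sum_{m\leq\ell}|c(m)|^2/m^{k-1}}$ comes from a Parseval-style bound on an integral of $|f(iy)|^2$ against $y^{k-2}$, while the exponential linear form $\sum_{m\leq\ell} c(m) e^{-7.288 m}$ is essentially the value of the truncated $f$ at a specific height $y_0 = 7.288/(2\pi)$, chosen to optimally balance the two regimes.

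The main obstacle will be tracking the numerical constants $11$, $e^{18.72}$, $41.41$, and $7.288$ through every estimate. Optimizing the cutoff $y_0$ requires uniform bounds on the Bessel function $J_{k-1}$ and precise control of the Kloosterman--Bessel tail with sharp simultaneous dependence on both $k$ and $n$; this delicate bookkeeping is presumably why the authors devoted a separate paper~\cite{JenkinsRouse} to it.
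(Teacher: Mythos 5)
Theorem~\ref{cuspformbound} is not proved in this paper; it is quoted verbatim from the authors' earlier work \cite{JenkinsRouse} and used here as a black box, so there is no internal argument to compare your proposal against. Judging the sketch on its own terms, the overall strategy (a trace formula plus a Petersson-norm bound) is reasonable in spirit, but two steps do not survive scrutiny.

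First, the Cauchy--Schwarz step $|c(n)|^2 \leq \|f\|^2 \sum_j |c_{f_j}(n)|^2$, combined with the diagonal Petersson trace formula, yields a bound proportional to $\|f\|\, n^{(k-1)/2}$, not to $d(n)\, n^{(k-1)/2}$: the Kloosterman--Bessel tail in the trace formula is typically $O(1)$, not $O(d(n)^2)$, so no divisor function appears from this route. The $d(n)$ in the statement is the signature of Deligne's bound applied to a Hecke eigenbasis, which in turn requires controlling $\sum_j |b_j|$ when $f = \sum_j b_j f_j$ is expanded in Hecke-normalized eigenforms --- an $\ell^1$-type quantity genuinely different from $\|f\|$ and not dominated by Cauchy--Schwarz in a dimension-free way.

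Second, your plan to bound $\|f\|$ by integrating $|f(iy)|^2$ and using the truncation $\sum_{m\leq\ell}c(m)q^m$ where it ``approximates $f$ well'' is circular as stated: the truncation error in any region is controlled precisely by the coefficients $c(n)$, $n>\ell$, that the theorem seeks to bound, and you do not indicate a bootstrap to break the circle. The proposed split into regimes is also misaligned with the geometry, since the standard fundamental domain for $\SL_2(\Z)$ already lies inside $\{y \geq \sqrt{3}/2\}$, so there is no low-$y$ region to pull back via $z\mapsto -1/z$; you appear to be conflating this with the inversion at $y\approx\frac{k}{2\pi}\log k$ used in the present paper to prove Proposition~\ref{boundonC}. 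A non-circular route that actually matches the shape of the two terms is to expand $f=\sum_{m\leq\ell}\alpha_m P_m$ in the Poincar\'e series, observe that $\|P_m\|^2=\frac{\Gamma(k-1)}{(4\pi m)^{k-1}}p_m(m)\approx\frac{\Gamma(k-1)}{(4\pi m)^{k-1}}$, so the leading contribution to $\|f\|^2$ is $\frac{\Gamma(k-1)}{(4\pi)^{k-1}}\sum_{m\leq\ell}\frac{|c(m)|^2}{m^{k-1}}$ --- exactly your first parenthesized term up to the $k$-dependent constant --- and then invert the nearly diagonal Gram matrix $\bigl(\langle P_m,P_n\rangle\bigr)_{m,n\leq\ell}$. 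The exponential linear form $\sum_{m\leq\ell}c(m)e^{-7.288m}$ is the price of controlling the off-diagonal Kloosterman--Bessel entries of that matrix, not the value of a truncated $f$ at a cutoff height.
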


\section{Computing $A(k)$ for small values of $k$}
\label{smallk}

As above, for any positive weight $k \equiv 0 \pmod{4}$ let $\ell = \lfloor \frac{k}{12}\rfloor$, so that the space $M_k$ has dimension $\ell+1$.  There is a basis for $M_k$ consisting of modular forms $F_{k, m}$ for $0 \leq m \leq \ell$ where each $F_{k, m}(z)$ has a Fourier expansion of the form \[F_{k, m}(z) = q^{m} + O(q^{\ell+1}) = \sum_{n=0}^{\infty} c_{m}(n) q^{n}.\]  We use these basis elements, whose Fourier coefficients $c_m(n)$ are straightforward to compute, to find $A(k)$ for small values of $k$.

For $k=12$, a basis for $M_{12}$ is
\begin{align*}
  F_{12, 0} &= 1 + O(q^2),\\
  F_{12, 1} = \Delta &= q + O(q^2),
\end{align*}
and any normalized form $f = 1 + \sum_{n\geq 1} a(n) q^n \in M_{12}$ may be written \[f = F_{12, 0} + A F_{12, 1} = 1 + A q + (196560-24A)q^2 + \cdots.\]  If $f$ has only nonnegative Fourier coefficients, then the coefficients of $q$ and $q^2$ give the inequalities $0 \leq A$ and $A \leq 8190$ respectively.  Writing $f$ as $f = E_{12} + d_1 F_{12, 1}$, we find that $d_1 \leq 8096$.  Using the well-known bound $|\tau(n)| \leq d(n) n^{11/2}$ on the coefficients $\tau(n)$ of $\Delta = F_{12, 1}$, we find that the coefficient of the Eisenstein part is larger than the coefficient of the cusp form part for all $n \geq 3$, so $a(n) > 0$ for $n \geq 3$.  Thus, if $a(1)$ and $a(2)$ are nonnegative, then all $a(n)$ are nonnegative.  The form \[F_{12, 0} + 8191 F_{12, 1} = 1 + 8191q - 24q^2 + \cdots\] has all Fourier coefficients nonnegative except for the coefficient of $q^2$, so $A(12) = 2$.

The spaces $M_{16}$ and $M_{20}$ are likewise two-dimensional, and similar computations may be carried out. In weight $16$, the $q^3$ term of a form $f = F_{16, 0} + A F_{16, 1}$ with nonnegative Fourier coefficients gives $A \leq \frac{174080}{9}$, and writing $f = E_{16} + d_1 F_{16, 1}$ gives $d_1 \leq 19338$.  The $n$th Fourier coefficient of $F_{16, 1}$ is bounded by $d(n) n^{15/2}$, so the coefficient of the Eisenstein part dominates for $n \geq 4$.  The form \[F_{16, 0} + 19343 F_{16, 1} = 1+19343 q + 4324968 q^2 - 2604 q^3 + O(q^4)\] has every Fourier coefficient nonnegative except for the coefficient of $q^3$, so $A(16) = 3$.  We note that if $A \geq 0$, then the coefficient of $q^2$ in the Fourier expansion of $f$ is necessarily nonnegative.  Thus, if the coefficients of $q$ and $q^3$ are nonnegative in a form $f = 1 + \sum_{n \geq 1} a(n) q^n \in M_{16}$, then all other Fourier coefficients are nonnegative.  In weight $20$, similar computations give $0 \leq A \leq 65686.0145$ and $d_1 \leq 65686$, with the Eisenstein part dominating for $n \geq 5$.  The form \[F_{20, 0} + 65687 F_{20, 1} = 1+65687 q+29992872 q^2+3415037124 q^3-311824 q^4 + O(q^5)\] has every Fourier coefficient nonnegative except for the coefficient of $q^4$, so $A(20) = 4$.  The coefficients of $q^2$ and $q^3$ in $f$ will be nonnegative whenever $A \geq 0$, so checking the coefficients of $q$ and $q^4$ suffices to show nonnegativity of all coefficients for forms in $M_{20}$.

The space $M_{24}$ has dimension three, and is spanned by the canonical basis elements
\begin{align*}
    F_{24, 2} = \Delta^2 &= q^2 -48q^3 + 1080 q^4 - 15040 q^5 + \cdots,\\
    F_{24, 1} = \Delta^2(j - 696) &= q + 195660 q^3 + 12080128 q^4 + \cdots,\\
    F_{24, 0} = \Delta^2(j^2 -1440 j + 125280) &= 1 + 52416000 q^3 + 39007332000 q^4 + \cdots.
\end{align*}
If the form
\begin{align*}f(z) = 1 + \sum_{n=1}^{\infty} a(n) q^n &=  F_{24, 0} + A F_{24, 1} + B F_{24, 2} \\ &= 1 + A q + B q^2 + \cdots \in M_{24}\end{align*} has only nonnegative Fourier coefficients, then using the Fourier expansions of the basis elements to compute each Fourier coefficient gives a sequence of inequalities on $A$ and $B$.  The first few are
\[a(1) = A \geq 0,\]
\[a(2) = B \geq 0,\]
\[a(3) = 52416000+195660 A - 48 B \geq 0,\]
\[a(4) = 39007332000 + 12080128 A + 1080 B \geq 0,\]
\[a(5) = 6609020221440 + 44656110 A - 15040 B \geq 0,\]
\[a(6) = 437824977408000 - 982499328 A + 143820 B \geq 0.\]
The region of the $A$-$B$ plane satisfying all of these inequalities is a pentagon bounded by the lines $a(n) = 0$ for $n = 1, 2, 3, 5, 6$, which is not intersected by the line $a(4) = 0$, as seen in the following figures.
\begin{center}
\includegraphics[height=3in]{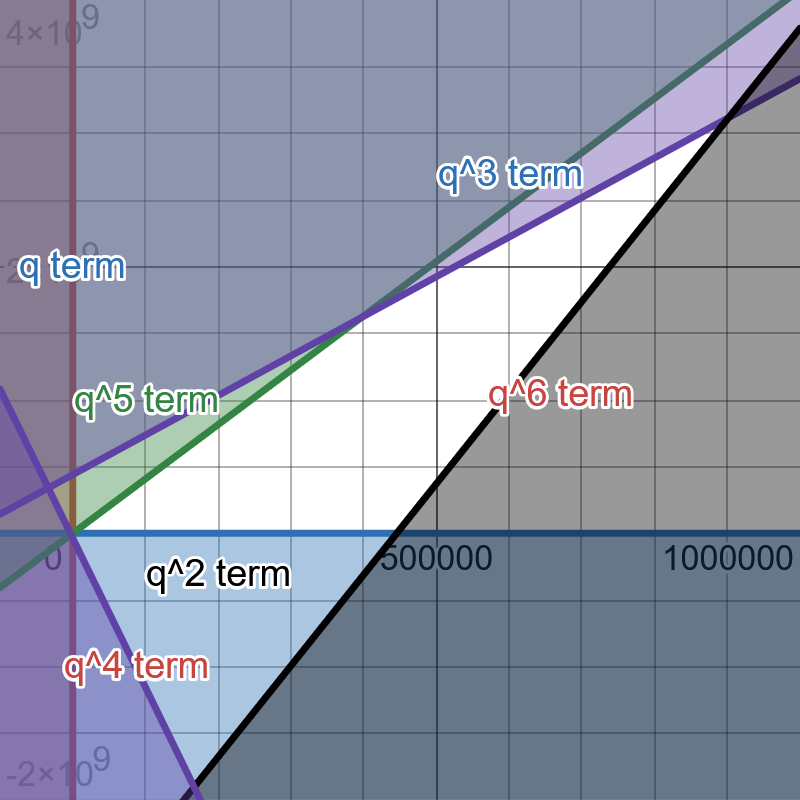} \includegraphics[height=3in]{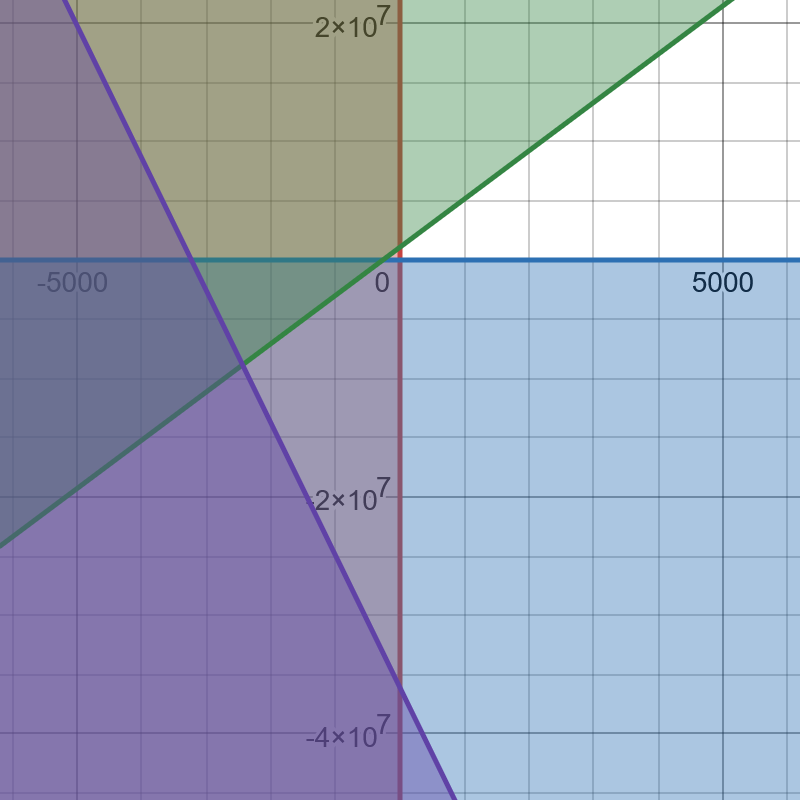}
\end{center}
By moving slightly outside each edge of the pentagon, computations allow us to obtain the forms
\small
\begin{align*}
F_{24, 0} - F_{24, 1} + F_{24, 2} = &1 - q + q^2 + 52220292 q^3 + 38995252952 q^4 + \cdots, \\
F_{24, 0} + F_{24, 1} - F_{24, 2} = &1 + q - q^2 + 52611708 q^3 + 39019411048 q^4 + \cdots, \\
F_{24, 0} + F_{24, 1} + 1096077 F_{24, 2} = &1 + q + 1096077 q^2 - 36 q^3 + 40203175288 q^4 + \cdots, \\
F_{24, 0} + 434170 F_{24, 1} + 1728600000 F_{24, 2} =
&1 + 434170 q + 1728600000 q^2 + 2029318200 q^3 \\ &+ 7150724505760 q^4 - 780499860 q^5 + \cdots, \\
F_{24, 0} + 445624 F_{24, 1} + F_{24, 2} = &1 + 445624 q + q^2 +  87243207792 q^3 + 5422202292952 q^4 \\ &+ 26508854569040 q^5 - 302988852 q^6 + \cdots,
\end{align*}
\normalsize
each of which has exactly one negative Fourier coefficient.

We now show that if the coefficients $a(1), a(2), \ldots, a(6)$ of $q^1, q^2, \ldots, q^6$ are nonnegative, then all other Fourier coefficients $a(n)$ of $f$ are nonnegative as well.  If we write $f$ in the basis that replaces $F_{24, 0}$ with $E_{24}$, so that
\[f = E_{24} + d_1 F_{24, 1} + d_2 F_{24, 2},\]
then computing the coefficients of $E_{24}$ gives $d_1 \approx A - 0.000554$ and $d_2 \approx B - 4650.635$.  Bounds on $A$ and $B$ from the pentagon in the $A$-$B$ plane above give $|d_1| \leq 901973$ and $|d_2| \leq 3117523826.926$.

Theorem~\ref{cuspformbound} shows that the $n$th Fourier coefficient of $d_1 F_{24, 1} + d_2 F_{24, 2}$ is bounded above by $1.7009796\cdot 10^{15} d(n) n^{23/2}$.  If the Eisenstein series coefficient is larger than the cusp form bound, then $a(n)$ will be positive, which happens when
\[\frac{-48}{B_{24}} \sigma_{23}(n) > 1.7009796\cdot 10^{15} d(n) n^{23/2}.\]
Direct computation shows that this is true for $n \geq 52$.

For $13 \leq n \leq 51$, the distance from the line $a(n) = 0$ to the origin of the $A$-$B$ plane is greater than the largest width of the pentagon, so none of these lines can intersect the pentagon, and if $a(1), \ldots, a(6)$ are positive then $a(n)$ must be as well.  Checking the lines $a(n) = 0 $ for $7 \leq n \leq 12$ directly confirms that none of these lines intersect the pentagon either.
Thus, if the coefficients $a(1), a(2), a(3), a(5)$, and $a(6)$ in the Fourier expansion of $f$ are nonnegative, then all Fourier coefficients of $f$ must be nonnegative.  We conclude that $A(24) = 6$.

Similar computations show that for any holomorphic modular form of weight $28$ with Fourier expansion $1 + O(q)$, if the coefficients of $q^1, q^2, q^4, q^7$, and $q^8$ are nonnegative then all other Fourier coefficients are as well.  Additionally, for each of the exponents $n = 1, 2, 4, 7, 8$ there exists a modular form of weight $28$ with all Fourier coefficients nonnegative except for the coefficient of $q^n$.  Thus, $A(28) = 8$.  Further, for any holomorphic form of weight $32$ with Fourier expansion $1 + O(q)$, if the coefficients of $q^1, q^2, q^5, q^9$, and $q^{10}$ are nonnegative then all other Fourier coefficients are as well.  For each of the exponents $n = 1, 2, 5, 9, 10$ there exists a modular form of weight 32 with all Fourier coefficients nonnegative except for the coefficient of $q^n$.  Therefore, $A(32) = 10$.

This approach to computing $A(k)$ becomes more difficult for larger weights.  Because the proof of the upper bound in Theorem~\ref{Akbounds} has the hypothesis that $k \geq 92$, we need an alternate method to compute $A(k)$ for $36 \leq k \leq 88$.  We describe one such algorithm here.

Suppose that the form
\[
  f = 1 + \sum_{n=1}^{\infty} a(n) q^{n} \in M_{k}
\]
has nonnegative Fourier coefficients, and write $f$ in terms of canonical basis elements as
\[
  f = F_{k,0} + \sum_{m=1}^{\ell} a(m) F_{k,m}.
\]
Note that for $1 \leq m \leq \ell$, the only term of this sum that can contribute to the Fourier coefficient of $q^m$ is the term coming from $F_{k, m}$; therefore, $a(m) \geq 0$ for $1 \leq m \leq \ell$.

We search for a positive integer $t$ for which $c_{m}(t) < 0$ for all $1 \leq m \leq \ell$. It is not clear whether such a value of $t$ must exist for all $k$, but for $12 \leq k \leq 88$, a minimal value of $t$ can be found by computing the Fourier expansions of the $F_{k,m}$.  For this minimal value of $t$, the assumption that the coefficient of $q^{t}$ in $f$ is nonnegative implies that for any $1 \leq m \leq \ell$, we have
\[
  0 \leq a(t) = c_{0}(t) + \sum_{m=1}^{\ell} a(m) c_{m}(t) \leq c_{0}(t) + a(m) c_{m}(t) \leq c_0(t)
\]
since all $a(m)$ are nonnegative and all $c_m(t)$ are negative.
This implies that $a(m) \leq \frac{-c_{0}(t)}{c_{m}(t)}$. If we represent $f$ as the sum
\[
  f = E_{k} + \sum_{n=1}^{\infty} c(n) q^{n},
\]
of an Eisenstein series and a cusp form, it follows that
\[
  |c(m)| \leq \max \left\{ -\frac{2k}{B_{k}} \sigma_{k-1}(m),
  \frac{-c_{0}(t)}{c_{m}(t)} + \frac{2k}{B_{k}} \sigma_{k-1}(m) \right\}.
\]
We insert this bound into Theorem~\ref{cuspformbound} and, for each weight $k$ with $12\leq k \leq 88$, we compute a constant $C_{2}$
so that
\begin{equation}\label{C2bound}
  -\frac{2k}{B_{k}} \sigma_{k-1}(n) - C_{2} d(n) n^{\frac{k-1}{2}}
  \leq a(n) \leq -\frac{2k}{B_{k}} \sigma_{k-1}(n) + C_{2} d(n) n^{\frac{k-1}{2}}.
\end{equation}
The upper bound in this inequality will be used to prove
Theorem~\ref{upperbound} when $k$ is small.  Using the lower bound and the fact that $d(n) \leq 2 \sqrt{n}$, we find that the $n$th coefficient of $f$ is positive provided that
\[
  n > \left(-\frac{C_{2} B_{k}}{k}\right)^{\frac{1}{k/2-1}}.
\]
It follows that $A(k)$ has an upper bound $B(k)$ given by
\[
  A(k) \leq \max \left\{ \ell, t, \left\lfloor \left(-\frac{C_{2} B_{k}}{k}\right)^{\frac{1}{k/2-1}} \right\rfloor + 1 \right\} = B(k).
\]

Expressing $f$ in terms of canonical basis elements as above shows that the coefficient of $q^n$ in $f$ must be equal to \[c_0(n) + \sum_{m=1}^\ell a(m) c_m(n) \geq 0.\]  Every normalized form of weight $k$ with no nonnegative Fourier coefficients must satisfy this sequence of linear inequalities on $a(1), \ldots, a(m)$ for each $n=1, 2, \ldots, B(k)$.

For each weight $36 \leq k \leq 88$, we use the \texttt{linarith} tactic in Lean~\cite{Lean} to find the smallest $N$ for which assuming that all of the linear inequalities for $n=1, 2, \ldots, N$ are true implies that all of the linear inequalities for $n = N+1, N+2, \ldots, B(k)$ must also hold.  Thus, if the first $N$ Fourier coefficients of $f$ are nonnegative, then all other Fourier coefficients of $f$ are nonnegative (or positive, once $n > B(k)$).  Because \texttt{linarith} is a decision procedure and the first $N-1$ inequalities do not imply inequality $N$, there must be some set of values $a(1), a(2), \ldots, a(\ell)$ for which the $N$th Fourier coefficient of $f$ is negative and the first $N-1$ coefficients are nonnegative, and we conclude that $N = A(k)$.

Table~\ref{table2} gives the results of the calculations described above for $12 \leq k \leq 88$, as well as the actual values of $A(k)$ for each weight.

\begin{table}
\begin{center}
\caption{Values of $A(k)$ and related calculations for $12 \leq k \leq 88$}
\label{table2}
\begin{tabular}{ccccc|ccccc}
$k$ & $t$ & $C_{2}$ & $B(k)$ & $A(k)$ & $k$ & $t$ & $C_{2}$ & $B(k)$ & $A(k)$\\
\hline
$12$ & $2$ &  $6.89 \cdot 10^{12}$ & $171$ & $2$ & $52$ & $24$ & $1.19 \cdot 10^{21}$ & $70$ & $22$\\
$16$ & $3$ &  $2.40 \cdot 10^{13}$ & $73$ & $3$ &  $56$ & $28$ & $4.69 \cdot 10^{22}$ & $80$ & $26$\\
$20$ & $4$ &  $6.79 \cdot 10^{13}$ & $50$ & $4$ &  $60$ & $51$ & $1.92 \cdot 10^{30}$ & $147$ & $29$\\
$24$ & $7$ &  $6.37 \cdot 10^{16}$ & $71$ & $6$ &  $64$ & $74$ & $3.23 \cdot 10^{33}$ & $182$ & $33$\\
$28$ & $8$ &  $1.58 \cdot 10^{17}$ & $61$ & $8$ &  $68$ & $36$ & $6.00 \cdot 10^{23}$ & $90$ & $36$\\
$32$ & $10$ & $4.66 \cdot 10^{16}$ & $49$ & $10$ & $72$ & $56$ & $3.70 \cdot 10^{28}$ & $125$ & $41$\\
$36$ & $12$ & $2.37 \cdot 10^{17}$ & $51$ & $12$ & $76$ & $48$ & $1.41 \cdot 10^{26}$ & $108$ & $45$\\
$40$ & $14$ & $4.82 \cdot 10^{17}$ & $51$ & $14$ & $80$ & $91$ & $4.59 \cdot 10^{34}$ & $182$ & $49$\\
$44$ & $16$ & $9.90 \cdot 10^{17}$ & $52$ & $16$ & $84$ & $84$ & $1.03 \cdot 10^{33}$ & $165$ & $54$\\
$48$ & $22$ & $8.88 \cdot 10^{20}$ & $70$ & $19$ & $88$ & $60$ & $1.72 \cdot 10^{26}$ & $116$  & $59$\\
\end{tabular}
\end{center}
\end{table}

\section{Well-definedness of $A(k)$}\label{WellDefined}

We now study the geometry of the set of modular forms of weight $k$ with only nonnegative coefficients, and prove that $A(k)$ is well-defined.  Therefore, for a form $f(z) = 1 + \sum a(n) q^{n} \in M_{k}$, checking that $a(n) \geq 0$ for all $n$ up to some finite bound guarantees that all $a(n)$ are nonnegative.

Fix a positive integer $k \equiv 0 \pmod{4}$.  As noted above, $M_k$ has dimension $\ell+1$, and any normalized modular form in $M_k$ with nonnegative Fourier coefficients is determined by the coefficients of $q^1, q^2, \ldots, q^\ell$. Let $S \subseteq \R^\ell$ be the set of $\ell$-tuples $(a(1), a(2), \ldots, a(\ell))$ where the modular form $1 + a(1) q + \cdots + a(\ell) q^{\ell} + \cdots \in M_{k}$ has no negative Fourier coefficients.

If $F_{k,m} = \sum c_{m}(n) q^{n}$ is the unique modular form in $M_{k}$ whose Fourier expansion begins $q^{m} + O(q^
{\ell+1})$, then $(a(1), a(2), \ldots, a(\ell)) \in S$ corresponds to a modular form
\[
  F_{k,0} + \sum_{m=1}^{\ell} a(m) F_{k,m}.
\]
The requirement that the $n$th coefficient is nonnegative becomes
\[
  c_{0}(n) + \sum_{m=1}^{\ell} a(m) c_{m}(n) \geq 0.
\]
This is a closed half-space in $\R^{\ell}$. Call this closed half-space $S_{n}$, so that $S = \bigcap_{n=1}^{\infty} S_{n}$. Because $S$ is an intersection of closed sets, it is a closed set.

We next define a related sequence of sets $T_n$, where
\[
T_{n} = \{ (a(1), a(2), \ldots, a(\ell)) : \text{ the coefficient of } q^{n} \text{ in }
\sum_{m=1}^{\ell} a(m) F_{k,m} \text{ is nonnegative} \}.
\]
Each $T_{n}$ is a half-space in $\R^{\ell}$ whose boundary is parallel to that of $S_{n}$, since its defining condition can be written
\[\sum_{m=1}^{\ell} a(m) c_{m}(n) \geq 0.\]
Note that the boundary of $T_n$ goes through the origin.

We say that a set $S \subseteq \R^{\ell}$ is a convex polytope if it is the intersection of a set of closed half-spaces. A polytope is bounded if it is contained in a ball of finite radius, and a polytope is finite if it is the intersection of finitely many closed half-spaces.

With these definitions, we have the following theorem.
\begin{thm}
The set $S$ is a bounded finite convex polytope in $\R^{\ell}$. In particular, there is a finite set of positive integers $A$ so that
\[
  S = \bigcap_{n \in A} S_{n}.
\]
It follows that all of the coefficients of a modular form $f \in M_{k}$ are nonnegative if and only if
the coefficient of $q^{n}$ is nonnegative for all $n \in A$. Consequently, $A(k) \leq \max A$.
\end{thm}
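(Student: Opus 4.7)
Convexity of $S$ is immediate from its definition as an intersection of closed half-spaces, so the real content is boundedness and finiteness. For boundedness, my plan is to identify $T := \bigcap_{n=1}^{\infty} T_{n}$ with a set containing the recession cone of $S$, and then to show $T = \{0\}$. Given $a \in S$ and $v \in T$, for every $n$ and every $t \geq 0$,
\[
  c_{0}(n) + \sum_{r=1}^{\ell} (a(r) + t v(r))\, c_{r}(n) = \left( c_{0}(n) + \sum_{r=1}^{\ell} a(r) c_{r}(n) \right) + t \sum_{r=1}^{\ell} v(r) c_{r}(n) \geq 0,
\]
so $a + tv \in S$. Thus if $T = \{0\}$ then $S$ admits no unbounded direction, and being closed and convex, it is bounded.

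To show $T = \{0\}$, I would observe that $v = (v(1),\dots,v(\ell)) \in T$ corresponds, via the basis $F_{k,1},\dots,F_{k,\ell}$ of $S_{k}$, to the cusp form $g = \sum_{r=1}^{\ell} v(r) F_{k,r}$, whose $n$th Fourier coefficient is $\sum_{r} v(r) c_{r}(n) \geq 0$ by the definition of $T$. Since every nonzero cusp form for $\SL_{2}(\Z)$ has both positive and negative Fourier coefficients (as recalled in the introduction), this forces $g = 0$ and therefore $v = 0$. This sign-change input is the main substantive ingredient; the remaining steps are routine convex geometry and coefficient estimation.

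For finiteness, the plan is to uniformly bound the cusp form part of any $f$ corresponding to a point of $S$ and then apply Theorem~\ref{cuspformbound}. Boundedness of $S$ yields a constant $M$ with $|a(r)| \leq M$ for all $(a(1),\dots,a(\ell)) \in S$ and $1 \leq r \leq \ell$. Writing the corresponding form as $E_{k} + g$ with $g = \sum c(n) q^{n} \in S_{k}$, the identity $c(m) = a(m) + \tfrac{2k}{B_{k}} \sigma_{k-1}(m)$ for $1 \leq m \leq \ell$ bounds $|c(1)|,\dots,|c(\ell)|$ uniformly, and Theorem~\ref{cuspformbound} then supplies a constant $C = C(k)$ with $|c(n)| \leq C \cdot d(n) n^{(k-1)/2}$ for all $n \geq 1$. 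The Eisenstein contribution satisfies $-\tfrac{2k}{B_{k}} \sigma_{k-1}(n) \geq \tfrac{(2\pi)^{k}}{(k-1)!\,\zeta(k)} n^{k-1}$, which grows strictly faster than $d(n) n^{(k-1)/2}$; hence there is an integer $N$ such that for every $n > N$ and every $a \in S$, the $n$th coefficient of the associated form is automatically positive. Therefore $S = \bigcap_{n=1}^{N} S_{n}$, and taking $A = \{1,\dots,N\}$ gives the desired finite description and the bound $A(k) \leq \max A$.
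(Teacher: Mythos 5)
Your overall structure is correct, and your route to boundedness is genuinely different from the paper's and, I'd say, cleaner. The paper establishes $\bigcap_n T_n = \{0\}$ the same way you do (a cusp form with nonnegative coefficients is zero), but then uses compactness of $\mathbb{S}^{\ell-1}$ to extract a finite subfamily $B$ with $\bigcap_{n\in B} T_n = \{0\}$, and from that concludes $\bigcap_{n\in B} S_n$ is bounded via the fact that an unbounded closed convex set contains a ray. You bypass the finite-subcover step entirely by going through the recession cone, which is the more standard convex-geometry move; the compactness argument is no longer needed because finiteness is handled separately anyway. For finiteness, your argument and the paper's are essentially the same in content: bound $|c(1)|,\dots,|c(\ell)|$ uniformly, feed this into Theorem~\ref{cuspformbound}, and observe that $-\tfrac{2k}{B_k}\sigma_{k-1}(n) \gg n^{k-1}$ eventually dominates $d(n)n^{(k-1)/2}$. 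The paper phrases it as ``the distance from the origin to the boundary hyperplane of $S_n$ tends to infinity,'' while you phrase it as ``the $n$th coefficient of every form in $S$ is eventually positive''; both give $S = \bigcap_{n\leq N} S_n$ once boundedness is known.

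One point to fix in the boundedness step: you announce that $T$ \emph{contains} the recession cone of $S$ (which is the inclusion you need), but the displayed computation actually proves the opposite inclusion, namely $T \subseteq \mathrm{rec}(S)$ when $S \neq \emptyset$. That direction gives ``$S$ bounded $\Rightarrow T = \{0\}$,'' not the implication you want. The needed inclusion $\mathrm{rec}(S)\subseteq T$ is equally elementary: if $v\notin T_n$, so $\sum_r v(r)c_r(n) < 0$, then for any $a \in S$ the quantity $c_0(n)+\sum_r (a(r)+tv(r))c_r(n)$ is an affine function of $t$ with negative slope and hence eventually negative, so $a+tv\notin S$ for $t$ large and $v$ is not a recession direction. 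With that corrected, the argument is complete.
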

\begin{proof}
First, we show that $S$ is a convex polytope. Next we show that $S$ is bounded. Finally we show that $S$ is a finite polytope by showing that the distance between the boundary of $S_{n}$ and the origin tends to infinity with $n$.

The fact that $S$ is convex follows from the fact that a nonnegative
linear combination of forms with nonnegative coefficients also has nonnegative coefficients. To see that $S$ is bounded, we show that there is a finite set of integers $n_{1}$, $n_{2}$, $\ldots$, $n_{s}$ so that
\[
  S \subseteq \bigcap_{i=1}^{s} S_{n_{i}}.
\]
We use the well-known fact that a nonzero cusp form must have a negative Fourier coefficient.
As a consequence, the intersection of the $T_n$ is the single point at the origin:
\[
  \bigcap_{n=1}^{\infty} T_{n} = \{ (0,0,0,\ldots, 0)\}.
\]
Hence, the unit sphere in $\R^{\ell}$ satisfies $\mathbb{S}^{\ell-1} \subseteq \bigcup_{n=1}^{\infty} T_{n}^{c}$.
Note that $\mathbb{S}^{\ell-1}$ is compact, and $\bigcup_{n=1}^{\infty} T_{n}^{c}$ is an open cover for it.
Thus, there is a finite subset $B \subseteq \{ 1, 2, \ldots \}$ such that
\[
  \mathbb{S}^{\ell-1} \subseteq \bigcup_{n \in B} T_{n}^{c}.
\]
This implies that
\[
  \bigcap_{n \in B} T_{n} = \{ (0,0,0,\ldots, 0)\}.
\]
Indeed, if $\vec{v}$ is a nonzero vector in $\bigcap_{n \in B} T_{n}$,
then $\frac{1}{\|\vec{v}\|} \vec{v}$ must also be in $\bigcap_{n \in B} T_{n} \cap \mathbb{S}^{\ell-1} = \emptyset$.

It follows that every direction away from the origin is blocked by one of the boundaries of $T_{n}$ for $n \in B$, which implies that $\bigcap_{n \in B} S_{n}$ is bounded. This follows because every unbounded, closed, convex subset of $\R^{\ell}$ must contain a ray, and if a ray were contained in $\bigcap_{n \in B} S_{n}$, then a translation of this ray to the origin would be contained in $\bigcap_{n \in B} T_{n}$.
Since $\bigcap_{n \in B} S_{n}$ is bounded and
$S \subseteq \bigcap_{n \in B} S_{n}$, it follows that $S$ is bounded.

This is not sufficient to show that $S = \bigcap_{n \in B} S_{n}$, because
a bounded convex polytope need not be finite. However, we will show that there is some finite set $A$ with $B \subseteq A$ so that $S = \bigcap_{n \in A} S_{n}$. To show this, we will show
that the distance between the origin in $\R^{\ell}$ and the closest point to the origin that is on the boundary of
$S_{n}$ tends to infinity with $n$. This implies that there is a positive integer $N$ so that
if $n \geq N$, then $\bigcap_{n \in B} S_{n} \subseteq S_{N}$. Hence
\[
  S = \bigcap_{n=1}^{\infty} S_{n} = \bigcap_{n=1}^{N} S_{n}
\]
and so taking $A = \{ 1, 2, \ldots, N \}$ suffices.

The equation for the boundary of $S_{n}$, in terms of the variables $a(1)$, $\ldots$, $a(\ell)$, is
\[
  \sum_{m=1}^{\ell} a(m) c_{m}(n) = -c_{0}(n).
\]
In general, the distance between the origin and the hyperplane $\vec{v} \cdot \vec{x} = c$ is
$\frac{|c|}{\sqrt{\vec{v} \cdot \vec{v}}}$. Thus, it suffices to show that
\[
  \frac{|c_{0}(n)|}{\sqrt{ c_{1}(n)^{2} + c_{2}(n)^{2} + \cdots + c_{\ell}(n)^{2}}} \to \infty
\]
as $n \to \infty$.

For each $r$ with $1 \leq r \leq n$, $c_{r}(n)$ is the $n$th coefficient of a cusp form of weight $k$
and thus, $|c_{r}(n)| \ll_{r} d(n) n^{\frac{k-1}{2}}$. On the other hand, $c_{0}(n)$ is the $n$th coefficient
of a modular form which is $E_{k}$ plus a cusp form, and so $|c_{0}(n)| \gg n^{k-1}$. Combining these bounds shows that
\[
  \frac{|c_{0}(n)|}{\sqrt{ c_{1}(n)^{2} + c_{2}(n)^{2} + \cdots + c_{\ell}(n)^{2}}} \gg \frac{n^{k-1}}{d(n) n^{\frac{k-1}{2}}} \to \infty
\]
since $d(n) \ll n^{\epsilon}$ for all $\epsilon > 0$.
\end{proof}

\section{Lower bound on $A(k)$}
\label{lowerbound}
In this section, we prove the lower bound for $A(k)$ in Theorem~\ref{Akbounds} by exhibiting a modular form of weight $k$ whose Fourier expansion has many positive Fourier coefficients before its first negative coefficient.  Assume that $k \geq 16$, since $\frac{(k-1)^{2}}{16 \pi^{2}} < 1$ if $k \leq 12$.

If $P_{1} = \sum_{n=1}^{\infty} b(n) q^{n}$ is the Poincar\'e series of index $1$, then equation \eqref{poincare} gives
\[
  b(n) = n^{\frac{k-1}{2}} \left[\delta_{n,1} + 2 \pi \sum_{c=1}^{\infty} \frac{K(1,n;c)}{c} J_{k-1}\left(\frac{4 \pi \sqrt{n}}{c}\right)\right].
\]
We will show that when $n \leq \frac{(k-1)^{2}}{16 \pi^{2}}$, which is equivalent to $4 \pi \sqrt{n} \leq k-1$, the $c = 1$ term of the infinite sum is positive and is larger than the sum of the absolute values of the terms for $c \geq 2$, so $b(n)$ must be positive.  It follows that the first $\lfloor \frac{(k-1)^{2}}{16 \pi^{2}} \rfloor$ coefficients of the cusp form $P_1$ are positive. Since every cusp form has negative coefficients, the first negative Fourier coefficient of $P_1$ must be the coefficient of $q^n$ for some $n > \frac{(k-1)^{2}}{16 \pi^{2}}$, meaning that $\frac{(k-1)^{2}}{16 \pi^{2}} < A(k)$. The argument we give is substantially similar
to Theorem 2.1 of Noam Kimmel's preprint \cite{KimmelPreprint}. In this result, Kimmel
shows that the coefficient of $q^{1}$ in the level $1$ Poincar\'e series $P_{m}$ is nonzero.
Because of a simple relationship between the coefficient of $q^{1}$ in $P_{m}$
and the coefficient of $q^{m}$ in $P_{1}$, our result is equivalent to Kimmel's.

We compute that $K(1,n;1) = 1$, so the $c=1$ term is equal to $J_{k-1}(4\pi\sqrt{n})$.  The inequality
\[
  \frac{J_{\nu}(\nu x)}{x^{\nu} J_{\nu}(\nu)} \geq 1
\]
is valid for $\nu \geq 0$ and $0 < x \leq 1$ (item 10.14.7 of \cite{NIST:DLMF}); applying it with $\nu = k-1$ and $x = \frac{4\pi \sqrt{n}}{k-1}$ gives that
\[
  J_{k-1}(4 \pi \sqrt{n}) \geq \left(\frac{4 \pi \sqrt{n}}{k-1}\right)^{k-1} J_{k-1}(k-1).
\]
Watson shows (page 260 of \cite{Watson}) that $\nu^{1/3} J_{\nu}(\nu)$ is monotonically increasing, and that \[\lim_{\nu \to \infty} \nu^{1/3} J_{\nu}(\nu) = \frac{\Gamma(1/3)}{2^{2/3} 3^{1/6} \pi}.\] It follows that for $k \geq 16$, we have
\[
  J_{k-1}(k-1) \geq \frac{0.447}{(k-1)^{1/3}},
\]
and a lower bound for the $c=1$ term is given by
\[
J_{k-1}(4 \pi \sqrt{n}) \geq \frac{0.447}{(k-1)^{1/3}} \left(\frac{4 \pi \sqrt{n}}{k-1}\right)^{k-1}.
\]

For $c \geq 2$, the upper bound
\[
  |J_{\nu}(z)| \leq \frac{\left|\frac{1}{2} z\right|^{\nu}}{\Gamma(\nu+1)}
\]
is valid for $z$ real and $\nu \geq -1/2$ (item 10.14.4 of \cite{NIST:DLMF}).  Applying it with $\nu = k-1$ and $z = \frac{4\pi \sqrt{n}}{c}$ gives
\[
  \left|J_{k-1}\left(\frac{4 \pi \sqrt{n}}{c}\right)\right| \leq
  \left(\frac{2 \pi \sqrt{n}}{c}\right)^{k-1} \cdot \frac{1}{\Gamma(k)}
  = \frac{(2 \pi \sqrt{n})^{k-1}}{(k-1)!} \cdot \frac{1}{c^{k-1}}.
\]
From the inequality \eqref{facbound}, we obtain
\[
  \left|J_{k-1}\left(\frac{4 \pi \sqrt{n}}{c}\right)\right| \leq \left(\frac{2 \pi e \sqrt{n}}{k-1}\right)^{k-1} \cdot \frac{1}{\sqrt{2 \pi (k-1)}} \cdot \frac{1}{c^{k-1}}.
\]
The Weil bound for the Kloosterman sum (see equation 1.60 of \cite[p. 19]{IK}) states that \[|K(m,n;c)| \leq d(c) \gcd(m,n,c)^{1/2} \sqrt{c}.\] This gives
\begin{align*}
  \sum_{c=2}^{\infty} \left|\frac{K(1,n;c)}{c} J_{k-1}\left(\frac{4 \pi \sqrt{n}}{c}\right)\right| &\leq \left(\frac{2 \pi e \sqrt{n}}{k-1}\right)^{k-1} \cdot \frac{1}{\sqrt{ 2 \pi (k-1)}}
  \sum_{c=2}^{\infty} \frac{d(c)}{c^{k-1/2}}\\
  &\leq \frac{0.447}{(k-1)^{1/3}} \cdot \left(\frac{4 \pi \sqrt{n}}{k-1}\right)^{k-1}
  \cdot \left(\frac{0.893}{(k-1)^{1/6}} \cdot \left(\frac{e}{2}\right)^{k-1} \cdot \sum_{c=2}^{\infty} \frac{d(c)}{c^{k-1/2}}\right).
\end{align*}
To bound the sum on $c$ we use the fact that $\zeta(s)^{2} = \sum_{n=1}^{\infty} \frac{d(n)}{n^{s}}$.
For $s > 1$ real we have
\begin{align*}
  \zeta(s) &= 1 + \frac{1}{2^{s}} + \sum_{n=3}^{\infty} \frac{1}{n^{s}}
  = 1 + \frac{1}{2^{s}} + \int_{3}^{\infty} \frac{1}{\lfloor x \rfloor^{s}} \, dx\\
  &\leq 1 + \frac{1}{2^{s}} + \int_{3}^{\infty} \frac{1}{(x-1)^{s}} \, dx = 1 + \frac{1}{2^{s}} + \int_{2}^{\infty} \frac{1}{x^{s}} \, dx = 1 + \frac{s+1}{2^{s} (s-1)}.
\end{align*}
This gives that
\begin{align*}
  \sum_{c=2}^{\infty} \frac{d(c)}{c^{k-1/2}} = \zeta(k-1/2)^{2} - 1
  &\leq \left(1 + \frac{k+1/2}{2^{k-1/2} (k-1/2)}\right)^{2} - 1\\
  &= \frac{2k+1}{2^{k-1/2} (k-1/2)} + \frac{(k+1/2)^{2}}{2^{2k-1} (k-1/2)^{2}}\\
  &= \frac{1}{2^{k-1}} \left(\sqrt{2} \cdot \frac{2k+1}{k-1/2} + \frac{1}{2^{k}} \cdot \frac{(k+1/2)^{2}}{(k-1/2)^{2}}\right).
\end{align*}

It is straightforward to see that the quantity multiplied by $\frac{1}{2^{k-1}}$ above is decreasing, so letting $k=16$ gives
\[ \sum_{c=2}^{\infty} \frac{d(c)}{c^{k-1/2}} \leq \frac{3.02}{2^{k-1}}. \]
Therefore, the sum of the absolute values of the terms in the sum with $c \geq 2$ is bounded above by
\[
  \frac{0.447}{(k-1)^{1/3}} \cdot \left(\frac{4 \pi \sqrt{n}}{k-1}\right)^{k-1}
  \cdot \left(\frac{2.697}{(k-1)^{1/6}} (e/4)^{k-1}\right).
\]
Computation shows that $\frac{2.697 (e/4)^{k-1}}{(k-1)^{1/6}} < 1$ for $k \geq 16$, so the $c=1$ term must be larger than the sum of the absolute values of all other terms.  This concludes the proof of the lower bound in Theorem~\ref{Akbounds}.

We remark that the main term in the formula for the $n$th coefficient
is $J_{k-1}(4 \pi \sqrt{n})$. According to item 10.21.40 in \cite{NIST:DLMF}, the first zero of $J_{\nu}(x)$ is asymptotically about $\nu + 1.85 \nu^{1/3}$
and so $J_{k-1}(4 \pi \sqrt{n})$ is zero when $4 \pi \sqrt{n} \approx (k-1) + 1.85 (k-1)^{1/3}$.
This suggests that there is very little room for improvement in the above argument.

\section{Upper bound on $A(k)$}
\label{finalsec}
In this section, we prove three preliminary results and use these to give an upper bound on $A(k)$ with $k \geq 92$.  Combining this result with the results in Section~\ref{smallk} allows us to then prove Theorems~\ref{Akbounds},~\ref{positivity}, and~\ref{upperbound}.

Recall that $\ell = \lfloor \frac{k}{12}\rfloor$. We begin with a lemma bounding the contribution from the cusp form part of a modular form to its Fourier coefficients when the first $\ell+1$ Fourier coefficients are nonnegative.

\begin{lemma}
\label{smallC}
Assume that $k \geq 64$ and $f(z) = 1 + \sum_{n=1}^{\infty} a(n) q^{n} \in M_{k}$. Assume that $a(n) \geq 0$ for $1 \leq n \leq \ell$
and let $C = \sum_{n=1}^{\ell} a(n)$. Write $f(z) = E_{k}(z) + \sum_{n=1}^{\infty} c(n) q^{n}$. Then
\[
  |c(n)| \leq 12 (C + 1.5^{k}) \sqrt{\log k} \, d(n) n^{\frac{k-1}{2}}.
\]
\end{lemma}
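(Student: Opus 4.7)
The plan is to apply Theorem~\ref{cuspformbound} directly. Writing $b(m) = -\frac{2k}{B_k}\sigma_{k-1}(m) > 0$ for the $m$th Eisenstein coefficient, we have $c(m) = a(m) - b(m)$, so $|c(m)| \leq a(m) + b(m)$ (using $a(m) \geq 0$ for $m \leq \ell$). Setting $M := e^{18.72}(41.41)^{k/2}/k^{(k-1)/2}$, the goal is to show that
\[
  11\sqrt{\sum_{m=1}^{\ell} \frac{|c(m)|^2}{m^{k-1}}} + M \left|\sum_{m=1}^{\ell} c(m) e^{-7.288m}\right| \leq 12(C + 1.5^k).
\]
My approach is to use Minkowski's inequality on the first sum and the triangle inequality on the second to split the left-hand side into an ``$a$-part'' and a ``$b$-part'', bounding the former by $12C$ and the latter by $12\cdot 1.5^k$.

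For the $a$-part, the nonnegativity of $a(m)$ gives
\[
  \sqrt{\sum_{m=1}^\ell a(m)^2/m^{k-1}} \leq \sqrt{\sum_{m=1}^\ell a(m)^2} \leq \sum_{m=1}^\ell a(m) = C
\]
(the first inequality since $m^{k-1} \geq 1$, the second since $(\sum a(m))^2 \geq \sum a(m)^2$ for nonnegative $a(m)$), and $\sum_{m=1}^\ell a(m) e^{-7.288m} \leq e^{-7.288}C$ since $e^{-7.288m} \leq e^{-7.288}$ for $m \geq 1$. Thus the $a$-part is at most $(11 + Me^{-7.288}) C$. A direct calculation gives $Me^{-7.288} = e^{11.432}(41.41)^{k/2}/k^{(k-1)/2} \approx 0.66$ at $k=64$, and its logarithmic derivative in $k$, namely $\tfrac{1}{2}(\log(41.41/k) - (k-1)/k)$, is negative for all $k \geq 64$. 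So $Me^{-7.288} \leq 1$ and the $a$-part is bounded by $12C$.

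For the $b$-part, I would use $b(m) \leq \frac{(2\pi)^k \zeta(k-1)}{(k-1)! \zeta(k)} m^{k-1}$, which together with $\sum_{m \leq \ell} m^{k-1} \leq \ell^k$ yields $\sum_{m=1}^\ell b(m)^2/m^{k-1} \leq \bigl(\frac{(2\pi)^k \zeta(k-1)}{(k-1)!\zeta(k)}\bigr)^2 \ell^k$, and (since $x^{k-1}e^{-7.288x}$ is increasing on $[1,\ell]$ because $\ell \leq k/12 \leq (k-1)/7.288$) yields $\sum_{m=1}^\ell b(m) e^{-7.288m} \leq \frac{(2\pi)^k \zeta(k-1)}{(k-1)!\zeta(k)} \ell^k e^{-7.288\ell}$. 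Applying Stirling's bound~\eqref{facbound} to $(k-1)!$ together with $\ell \leq k/12$ and $\zeta(k-1)/\zeta(k) \leq 2$, both contributions reduce to products of rapidly decaying factors of the form $(2\pi e/(k-1))^{k-1}$, $(k/12)^{k/2}$ or $(k/12)^k$, $(41.41/k)^{k/2}$, and $e^{-0.6073k}$, with polynomial prefactors in $k$. The main obstacle is carefully tracking these explicit constants to verify that the $b$-part is bounded by $12 \cdot 1.5^k$ for every $k \geq 64$; at $k = 64$ the $b$-part is on the order of $10^{-3}$ (while $12 \cdot 1.5^{64} \approx 2 \cdot 10^{12}$), and since the logarithmic derivative in $k$ of each term divided by $1.5^k$ stays negative, the bound extends uniformly to all $k \geq 64$.
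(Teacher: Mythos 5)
Your proposal is correct and arrives at the lemma by a slightly different decomposition than the paper's. The paper first shows $\sum_{m=1}^{\ell} |c(m)| \leq C + 1.5^{k}$ (absorbing all the Eisenstein contribution into the constant $1.5^{k}$ up front), and then applies Cauchy--Schwarz twice to bound both terms of Theorem~\ref{cuspformbound} directly in terms of $\sum |c(m)|$, picking up factors $\zeta(k-1)^{1/2}$ and $e^{-7.287}$ along the way; the final numerical step is to check that $11(1 + 6\cdot 10^{-20}) + e^{11.433}\sqrt{41.41}(41.41/k)^{(k-1)/2}/\sqrt{\log k} < 12$ for $k \geq 64$. You instead split $|c(m)| \leq a(m) + b(m)$ via Minkowski and the triangle inequality and bound the $a$- and $b$-contributions independently. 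Your $a$-part is actually a bit cleaner than the paper's argument applied to $\sum|c(m)|$: using $m^{k-1} \geq 1$ and $(\sum a(m))^{2} \geq \sum a(m)^{2}$ avoids the $\zeta(k-1)^{1/2}$ factor entirely, and your verification that $Me^{-7.288} \leq 1$ for $k \geq 64$ (value $\approx 0.66$ at $k=64$, logarithmic derivative negative) is correct and exactly parallels the monotonicity check in the paper. Your $b$-part is the place where the argument is only sketched rather than executed: you give the right ingredients (bound $\sigma_{k-1}(m) \leq \zeta(k-1)m^{k-1}$, sum $m^{k-1}$ or $m^{k-1}e^{-7.288m}$ up to $\ell$ using monotonicity, apply Stirling), and you correctly observe that the resulting expressions decay super-exponentially while $1.5^{k}$ grows, so the inequality holds with an enormous margin; but the explicit monotonicity check that makes ``$\leq 12\cdot 1.5^{k}$ for all $k\geq 64$'' rigorous is left as a claim about logarithmic derivatives without being carried out. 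The paper handles this more economically by proving the single bound $\zeta(63)\sqrt{k/(2\pi)}(2\pi e/12)^{k} < 1.5^{k}$ once, rather than verifying two separate $b$-part terms, which is the main structural advantage of its formulation. Given the size of the margins you identify, completing your $b$-part verification would be routine, so this is a viable alternative route rather than a gap in the plan.
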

\begin{proof}
First, we have that
\begin{align*}
   \sum_{m=1}^{\ell} |c(m)| &\leq \sum_{m=1}^{\ell} a(m) + \left(-\frac{2k}{B_{k}} \right) \sum_{m=1}^{\ell} \sigma_{k-1}(m)\\
  &= C + \left(-\frac{2k}{B_{k}}\right) \sum_{m=1}^{\ell} \sum_{d | m} \left(\frac{m}{d}\right)^{k-1} \leq C + \left(-\frac{2k}{B_{k}}\right) \sum_{m=1}^{\ell} m^{k-1} \sum_{d=1}^{\infty} \frac{1}{d^{k-1}}\\
  &= C + \left(-\frac{2k}{B_{k}}\right) \sum_{m=1}^{\ell} m^{k-1} \zeta(k-1) \leq C + \frac{(2 \pi)^{k} \zeta(k-1)}{(k-1)! \zeta(k)} \ell \cdot \ell^{k-1}\\
  &\leq C + \frac{(2 \pi \ell)^{k} k \zeta(k-1)}{k! \zeta(k)}.
\end{align*}
We use the lower bound for $k!$ in equation~(\ref{facbound}) above, the fact that $\zeta(k)$ is decreasing and greater than $1$ for real $k>1$, and the fact that $\ell = \lfloor \frac{k}{12} \rfloor$ for $k \equiv 0 \pmod{4}$ to obtain
\[
  \sum_{m=1}^{\ell} |c(m)| \leq C + \frac{(2 \pi (k/12))^{k} k \zeta(63)}{\sqrt{2 \pi k} (k/e)^{k}}
  \leq C + \frac{\zeta(63)\sqrt{k}}{\sqrt{2 \pi}} \left(\frac{2 \pi e}{12}\right)^{k}.
\]
It is straightforward to verify that
$\zeta(63)\sqrt{\frac{k}{2\pi}} \left(\frac{2 \pi e}{12}\right)^{k} < 1.5^{k}$ for $k \geq 64$ and
hence $\sum_{m=1}^{\ell} |c(m)| \leq C + 1.5^{k}$.

We will combine this bound with Theorem~\ref{cuspformbound}, which states that
\small
\begin{equation}\label{JRbound}
  |c(n)| \leq
  \sqrt{\log k} \left(11 \sqrt{\sum_{m=1}^{\ell} \frac{|c(m)|^{2}}{m^{k-1}}} + \frac{e^{18.74} (41.41)^{k/2}}{k^{(k-1)/2}} \sum_{m=1}^{\ell} |c(m)| e^{-7.288m}\right) d(n) n^{\frac{k-1}{2}}.
\end{equation}
\normalsize
We repeatedly use the fact that if $x_{1}$, $\ldots$, $x_{\ell}$ are nonnegative real numbers, then
\[
  \sqrt{\sum_{m=1}^{\ell} x_{n}^{2}} \leq \sum_{m=1}^{\ell} x_{n}.
\]
To bound the first term in the parentheses in
\eqref{JRbound}, we use the Cauchy-Schwarz inequality to conclude that
\begin{align*}
  \sqrt{\sum_{m=1}^{\ell} \frac{|c(m)|^{2}}{m^{k-1}}} &\leq \sum_{m=1}^{\ell} \frac{|c(m)|}{m^{(k-1)/2}} \leq \left(\sum_{m=1}^{\ell} |c(m)|^{2}\right)^{1/2} \cdot \left(\sum_{m=1}^{\ell} \frac{1}{m^{k-1}}\right)^{1/2}\\
  &\leq \left(\sum_{m=1}^{\ell} |c(m)|\right) \cdot \zeta(k-1)^{1/2} \leq (C + 1.5^{k}) \zeta(63)^{1/2} \leq (1+6 \cdot 10^{-20}) (C + 1.5^{k}).
\end{align*}
To bound the second term in parentheses in \eqref{JRbound}, we again use the Cauchy-Schwarz inequality to obtain
\begin{align*}
    \sum_{m=1}^{\ell} |c(m)| e^{-7.288m}
    &\leq \left(\sum_{m=1}^{\ell} |c(m)|^{2}\right)^{1/2} \left(\sum_{m=1}^{\ell} e^{-14.576m}\right)^{1/2}\\
    &\leq \sum_{m=1}^{\ell} |c(m)| e^{-7.288}
    \left(\sum_{t=0}^{\infty} e^{-14.576 \cdot t}\right)^{1/2}\\
    &\leq \sum_{m=1}^{\ell} |c(m)| e^{-7.288}
    \cdot (1 + 2.4 \cdot 10^{-7})\\
    &\leq e^{-7.287} (C + 1.5^{k}).
\end{align*}
Returning to \eqref{JRbound} we obtain
\[
  |c(n)| \leq \left(11 \cdot (1 + 6 \cdot 10^{-20}) \sqrt{\log k}
  + e^{11.433} \sqrt{41.41} \left(\frac{41.41}{k}\right)^{(k-1)/2}\right) (C + 1.5^{k}) d(n) n^{\frac{k-1}{2}}.
\]
Dividing the first factor by $\sqrt{\log k}$, we obtain a function of $k$ which is clearly decreasing once $k \geq 41.41$. For $k \geq 64$, this quantity is less than $11.323 < 12$.
\end{proof}

We will also use the following lemma to obtain upper bounds on the tails of infinite sums that arise when bounding values of modular forms at specific points.
\begin{lemma}
\label{lemma3variation}
Let $s$ be a positive integer and $\alpha, \beta \in \R$ with $\alpha > 0$ and $\beta < 0$. If $s \beta < -\alpha$, then
\[
  \sum_{n=s}^{\infty} n^{\alpha} e^{n \beta} \leq s^{\alpha} e^{s \beta} \cdot \frac{1}{1 - e^{\alpha/s + \beta}}.
\]
\end{lemma}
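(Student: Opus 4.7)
The plan is to bound the sum term by term after factoring out the first term, and then compare to a geometric series. Writing $n = s + j$ with $j \geq 0$, we have
\[
\sum_{n=s}^{\infty} n^{\alpha} e^{n\beta} = s^{\alpha} e^{s\beta} \sum_{j=0}^{\infty} \left(1 + \frac{j}{s}\right)^{\alpha} e^{j\beta},
\]
so it suffices to bound the sum on the right by $1/(1 - e^{\alpha/s + \beta})$.

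The key estimate is the elementary inequality $(1+x)^{\alpha} \leq e^{\alpha x}$ for $x \geq 0$ and $\alpha > 0$, which follows from $\log(1+x) \leq x$. Applying this with $x = j/s$ gives
\[
\left(1 + \frac{j}{s}\right)^{\alpha} e^{j\beta} \leq e^{\alpha j/s} e^{j\beta} = \left(e^{\alpha/s + \beta}\right)^{j}.
\]
The hypothesis $s\beta < -\alpha$ is exactly equivalent to $\alpha/s + \beta < 0$, which means the common ratio $e^{\alpha/s + \beta}$ lies in $(0,1)$, so the geometric series converges and the resulting sum is $1/(1 - e^{\alpha/s + \beta})$. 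Multiplying back by $s^{\alpha} e^{s\beta}$ yields the claimed bound.

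I don't anticipate any real obstacle here: the proof is essentially a one-line application of $(1+x)^\alpha \leq e^{\alpha x}$ followed by summing a geometric series, with the hypothesis $s\beta < -\alpha$ inserted precisely to guarantee convergence of that series. The only mild care needed is in verifying that the hypothesis translates correctly to $\alpha/s + \beta < 0$, which it does by dividing through by $s > 0$.
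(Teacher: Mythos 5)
Your proof is correct and is essentially the same argument as the paper's: the paper sets $f(x) = \alpha\log x + x\beta$ and bounds $f(n) \leq f(s) + (n-s)(\alpha/s + \beta)$ by integrating $f'$, which after writing $n = s+j$ reduces precisely to your inequality $\log(1+j/s) \leq j/s$, followed by the same geometric series.
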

\begin{proof}
Letting $f(x) = \alpha \log(x) + x \beta$, the sum is $\sum_{n=s}^{\infty} e^{f(n)}$. For $x \geq s$, we have that
$f'(x) = \frac{\alpha}{x} + \beta \leq \frac{\alpha}{s} + \beta$. Thus,
$f(x) \leq f(s) + \int_{s}^{x} f'(t) \, dt \leq f(s) + (x-s) \left(\frac{\alpha}{s} + \beta\right)$ and so
\[
  e^{f(n)} \leq e^{f(s)} \cdot e^{(n-s) \cdot (\alpha/s + \beta)}.
\]
Therefore,
\[
  \sum_{n=s}^{\infty} e^{f(n)} \leq e^{f(s)} \cdot \sum_{n=s}^{\infty} e^{(n-s) (\alpha/s + \beta)} = e^{f(s)} \sum_{n=0}^\infty e^{n(\alpha/s + \beta)}.
\]
The infinite sum on the right hand side above is a geometric series. Summing this series gives the desired result.
\end{proof}

The next proposition shows that if the first $\ell$ coefficients of a modular form are nonnegative and their sum $C$ is too large, then the modular form must have a negative Fourier coefficient in a particular range.
\begin{prop}
\label{boundonC}
Suppose that $k \geq 84$, $f(z) = 1 + \sum_{n=1}^{\infty} a(n) q^{n} \in M_{k}$,
and $a(n) \geq 0$ for $1 \leq n \leq \ell$. Let $C = \sum_{n=1}^{\ell} a(n)$.
If $C > 1.806 (k/2 \pi)^{k} (\log k + \log \log k)^{k}$, then there is a positive integer $n \in [\ell+1,k^{2} (\log k + \log \log k)^{2}]$ with $a(n) < 0$.
\end{prop}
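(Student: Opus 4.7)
The plan is to exploit the modularity relation $f(-1/z)=z^{k}f(z)$ evaluated at $z=iy$ for
\[
y = \frac{k}{2\pi}(\log k + \log\log k), \qquad B_{n} = e^{-2\pi n/y} - y^{k}e^{-2\pi n y},
\]
which produces the identity $y^{k}-1 = \sum_{n\geq 1} a(n) B_{n}$ recorded in the introduction. A brief calculation confirms $B_{n}>0$ for all $n\geq 1$: the condition reduces to $2\pi(y-1/y) > k\log y$, which holds comfortably for $k\geq 64$ with this choice of $y$.

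I would argue by contrapositive: assume $a(n)\geq 0$ for every $n\in[\ell+1,N]$ where $N=\lfloor k^{2}(\log k+\log\log k)^{2}\rfloor$. Combined with the hypothesis, this gives $a(n)\geq 0$ on $[1,N]$, so
\[
\sum_{n=1}^{\ell} a(n) B_{n} \leq \sum_{n=1}^{N} a(n) B_{n} = y^{k}-1 - \sum_{n>N} a(n) B_{n}.
\]
Writing $a(n) = -\tfrac{2k}{B_{k}}\sigma_{k-1}(n) + c(n)$, the Eisenstein part is nonnegative, so $a(n)\geq -|c(n)|$ and hence $\sum_{n>N} a(n) B_{n} \geq -\sum_{n>N}|c(n)| B_{n}$. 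Combined with $\sum_{n=1}^{\ell} a(n) B_{n} \geq C\cdot\min_{1\leq n\leq \ell} B_{n}$, this yields the master inequality
\[
C \leq \frac{y^{k} + \sum_{n>N} |c(n)| B_{n}}{\min_{1\leq n\leq \ell} B_{n}}.
\]

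Two quantitative estimates finish the argument. For the denominator, $\min_{1\leq n\leq \ell} B_{n} \geq e^{-2\pi\ell/y} - y^{k}e^{-2\pi\ell y}$; using $\ell\leq k/12$ gives $2\pi\ell/y \leq \pi^{2}/(3(\log k + \log\log k))$, and the subtracted term is of order $(2\pi)^{-k}$ up to lower-order factors, hence negligible. For the tail, Lemma~\ref{smallC} supplies $|c(n)|\leq 12(C+1.5^{k})\sqrt{\log k}\,d(n)n^{(k-1)/2}$; combining with $d(n)\leq 2\sqrt{n}$ and $B_{n}\leq e^{-2\pi n/y}$ reduces the tail to a sum handled by Lemma~\ref{lemma3variation} with $s=N+1$, $\alpha=k/2$, $\beta=-2\pi/y$. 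The required hypothesis $s|\beta|>\alpha$ holds with wide margin since $s|\beta|/\alpha = 8\pi^{2}(\log k+\log\log k)$, and the resulting tail bound is dominated by $\exp(-4\pi^{2}k(\log k+\log\log k))$ up to polynomial factors, overwhelming the $(C+1.5^{k})$ prefactor.

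The delicate point is the tightness of the constant $1.806$. At $k=64$ one computes $\min B_{n}\approx e^{-0.589}\approx 0.555$, so $1/\min B_{n}\approx 1.802$, just below $1.806$. I would verify by direct numerical check that for $k\geq 64$ the combined slack from the $y^{k}e^{-2\pi\ell y}$ correction in the denominator and from the tail fits within the gap $1.806-1.802$; for larger $k$ both $\min B_{n}$ grows toward $1$ and the tail decays super-exponentially, so the inequality holds with ample margin. Contradicting the assumption $C > 1.806(k/(2\pi))^{k}(\log k+\log\log k)^{k}$ then forces some $n\in[\ell+1,N]$ with $a(n)<0$.
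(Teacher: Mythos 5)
Your plan follows essentially the same route as the paper's proof: the same transformation-law identity at $z=iy$ with the same choice $y=\frac{k}{2\pi}(\log k + \log\log k)$, the same use of Lemma~\ref{smallC} to control the cuspidal coefficients in terms of $C$, the same Lemma~\ref{lemma3variation} with $\alpha=k/2$, $\beta=-2\pi/y$ to bound the tail, and the same contradiction with the hypothesis on $C$. You do streamline one step: by writing $a(n)$ as Eisenstein plus cuspidal and noting the Eisenstein part is nonnegative, you reduce the tail estimate to $\sum_{n>N}|c(n)|B_n$ alone, whereas the paper bounds $\sum_{n\geq s}|a(n)|e^{-2\pi n/y}$ and therefore also handles the (negligible) Eisenstein tail $\sum_{n\geq s}(-2k/B_k)\sigma_{k-1}(n)e^{-2\pi n/y}$ separately. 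The numerics you defer to a ``direct check'' are exactly what the paper carries out: it shows $\min_{1\leq n\leq\ell}(e^{-2\pi n/y}-y^ke^{-2\pi ny})\geq 0.554$ (consistent with your estimate near $0.555$ at $k=64$), giving a working margin of $C(0.554-1/1.806)\approx 0.00029\,C$, and it bounds the tail by roughly $10^{-4000}C$, far inside that margin; so your acknowledged gap does close.
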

\begin{proof}
Applying the transformation law $f(-1/z) = z^{k} f(z)$ and setting $z = iy$ gives the equation
\[
  1 + \sum_{n=1}^{\infty} a(n) e^{-2 \pi n/y} = y^{k} \left(1 + \sum_{n=1}^{\infty} a(n) e^{-2 \pi n y}\right).
\]
We rewrite this as
\begin{equation}
\label{keyeq}
  1 + \sum_{n=1}^{\infty} a(n) \left(e^{-2 \pi n/y} - y^{k} e^{-2 \pi n y}\right) = y^{k}.
\end{equation}
The quantity $e^{-2 \pi n/y} - y^{k} e^{-2 \pi n y}$ is positive if and only if
$\frac{2 \pi (y-1/y)}{\log(y)} > k/n$. In particular, if we choose $y$ sufficiently large that $\frac{2 \pi (y-1/y)}{\log(y)} > k$, or equivalently that $e^{-2 \pi/y} - y^{k} e^{-2 \pi y} > 0$, it follows that
$e^{-2 \pi n / y} - y^{k} e^{-2 \pi n y} > 0$ for all positive integers $n$.

We now show that the choice $y = \frac{k}{2 \pi} (\log k + \log \log k)$ has this property. We compute that
\begin{align*}
    y^{k} &= \left(\frac{k}{2 \pi}\right)^{k} (\log k + \log \log k)^{k}\\
    &= \frac{k^{k} (\log k)^{k}}{(2 \pi)^{k}} \left(1 + \frac{\log \log k}{\log k}\right)^{k},
\end{align*}
and
\[
  e^{-2 \pi y} = e^{-k (\log k + \log \log k)} = \frac{1}{k^{k}} \cdot \frac{1}{(\log k)^{k}}.
\]
In particular, $y^{k} e^{-2 \pi y} = \frac{1}{(2 \pi)^{k}} \left(1 + \frac{\log \log k}{\log k}\right)^{k}
\leq \frac{1}{(2 \pi)^{k}} 1.336^{k} \leq (0.213)^{k}$ for $k \geq 84$. On the other hand, for $1 \leq n \leq \ell$, we have
\[
  e^{-2 \pi n/y} \geq e^{-2 \pi \ell/y} \geq e^{-2 \pi (k/12)/y} = e^{-\frac{4 \pi^{2}}{12 (\log k + \log \log k)}} \geq e^{-0.59}
\]
for $k \geq 84$. It follows that
\[e^{-2 \pi/y} - y^{k} e^{-2 \pi y} \geq e^{-0.59}-(0.213)^k > 0\]
for $k \geq 84$, as required.  Additionally, we compute that
\[
  \sum_{n=1}^{\ell} a(n) (e^{-2 \pi n/y} - y^{k} e^{-2 \pi n y}) \geq \sum_{n=1}^{\ell} a(n) (e^{-0.59} - (0.213)^k)  \geq \sum_{n=1}^{\ell} a(n) (0.554327 - 3.84\cdot 10^{-57}) \geq 0.554C.
\]
It follows from~\eqref{keyeq} that for this value of $y$, we have
\[
  \sum_{n=\ell+1}^{\infty} a(n) \left(e^{-2 \pi n/y} - y^{k} e^{-2 \pi n y}\right) \leq \left(\frac{k}{2 \pi}\right)^{k} (\log k + \log \log k)^{k} - 0.554C - 1.
\]
The assumption that $C > 1.806 \left(\frac{k}{2 \pi}\right)^{k} (\log k + \log \log k)^{k}$ allows us to obtain an upper bound for the series of
\begin{equation}\label{TailBound}
\sum_{n=\ell+1}^{\infty} a(n) \left(e^{-2 \pi n/y} - y^{k} e^{-2 \pi n y}\right) < \frac{1}{1.806} C - 0.554C < -0.00029C.
\end{equation}

If we can find an integer $s$ for which
\[
  \sum_{n=s}^{\infty} |a(n)| \left(e^{-2 \pi n /y} - y^{k} e^{-2 \pi n y}\right)  \leq \sum_{n=s}^{\infty} |a(n)| e^{-2 \pi n/ y} < 0.00029C,
\]
then the bound~(\ref{TailBound}) on the sum forces $\sum_{n=\ell+1}^{s} a(n) \left(e^{-2 \pi n /y} - y^{k} e^{-2 \pi n y}\right)$ to be negative,
and since $e^{-2 \pi n / y} - y^{k} e^{-2 \pi n y} > 0$ for all positive integers $n$, there must be at least one integer $n$ in the interval $[\ell+1,s]$ with $a(n) < 0$.
We will show that the choice $s = \lceil k^{2} (\log k + \log \log k)^{2} \rceil$ satisfies this property.

Using Lemma~\ref{smallC} to bound the cusp form part of $a(n)$, we know that
\[
  |a(n)| \leq -\frac{2k}{B_{k}} \sigma_{k-1}(n) + 12 \sqrt{\log k} (C + 1.5^{k}) d(n) n^{(k-1)/2}.
\]
We begin by bounding
\[
  \sum_{n=s}^{\infty} \left(-\frac{2k}{B_{k}}\right) \sigma_{k-1}(n) e^{-2 \pi n /y}.
\]
Because $k \geq 84$ and the $\zeta$-function is decreasing on the interval $[83, \infty)$, we have $\sigma_{k-1}(n) \leq \zeta(k-1) n^{k-1} \leq (1 + 2 \cdot 10^{-25}) n^{k-1}$.  We use the inequality~(\ref{facbound}) to see that
\[
  \left(-\frac{2k}{B_{k}}\right) = \frac{(2 \pi)^{k} \cdot k}{k! \zeta(k)}
  \leq \frac{(2 \pi)^{k} \cdot k}{\sqrt{2 \pi k} (k/e)^{k}}
  = \frac{\sqrt{k}}{\sqrt{2 \pi}} \left(\frac{2 \pi e}{k}\right)^{k}.
\]
We apply Lemma~\ref{lemma3variation} with $\alpha = k-1$ and $\beta = -2 \pi/y$, first checking that
\[s \beta = \frac{-2 \pi \lceil k^{2} (\log k + \log \log k)^{2} \rceil}{\frac{k}{2 \pi} (\log k + \log \log k)} <
-4\pi^2 k (\log k + \log \log k) < 1-k = -\alpha,\]
and obtain the bound
\[
  \zeta(k-1)  \left(-\frac{2k}{B_{k}}\right) \sum_{n=s}^{\infty}  n^{k-1} e^{-2 \pi n / y} \leq (1+2 \cdot 10^{-25}) \cdot \frac{\sqrt{k}}{s \sqrt{2 \pi}} \left(\frac{2 \pi e s}{k}\right)^{k} \frac{e^{-2 \pi s/y}}{1 - e^{(k-1)/s - 2 \pi/y}}.
\]
The function $(k-1)\log(s) -2\pi s/y$ is decreasing as a function of $s$, which can be seen by computing its derivative $\frac{k-1}{s} - 2 \pi/y = \frac{\alpha}{s} + \beta$, which is negative since $s\beta < -\alpha$.  Thus, the function $s^{k-1} e^{-2 \pi s / y}$ is decreasing as a function of $s$, and we can obtain an upper bound for it by replacing our choice of $s$ from above with $k^{2} (\log k + \log \log k)^{2}$.  This gives an upper bound of
\begin{equation}\label{mess}
  \frac{1 + 2 \cdot 10^{-25}}{\sqrt{2 \pi}} \frac{\sqrt{k}}{k^{2} (\log k + \log \log k)^{2}}
  \cdot \left(2 \pi e k (\log k + \log \log k)^{2}\right)^{k} \cdot \frac{e^{-4 \pi^{2} k (\log k + \log \log k)}}{1 - e^{(k-1)/s-2\pi/y}}.
\end{equation}
For $x < 0$, it is straightforward to see that
$\frac{1}{1 - e^{x}} < 1-\frac{1}{x}$, which gives
\[
\frac{1}{1 - e^{(k-1)/s - 2\pi/y}} \leq 1-\frac{1}{\frac{k-1}{s} - \frac{2 \pi}{y}} =
\frac{y}{2\pi - (k-1)\frac{y}{s}} + 1.
\]
We claim that this quantity is less than $\frac{y}{2\pi-1}$.  This can be seen by noting that $\frac{y}{2\pi - a} + 1 < \frac{y}{2\pi-1}$ exactly when $a < \frac{y+2\pi - 4\pi^2}{y+1-2\pi}$, and computing that $a = (k-1)\frac{y}{s} < 0.027$ and $\frac{y+2\pi - 4\pi^2}{y+1-2\pi} > 0.622$ when $k \geq 84$.

This gives an upper bound on~(\ref{mess}) of
\[
  \frac{1 + 2 \cdot 10^{-25}}{(2 \pi)^{3/2} (2 \pi - 1)}  \cdot \frac{1}{\sqrt{k}}
  \cdot \left(2 \pi e k\right)^{k} \left(\log k + \log \log k\right)^{2k-1} e^{-4 \pi^{2} k (\log k + \log \log k)^{2}}.
\]
Since $C > 1.806 (k/2 \pi)^{k} (\log k + \log \log k)^{k}$, this upper bound is less
than or equal to
\[
  \frac{1 + 2 \cdot 10^{-25}}{(2 \pi)^{3/2} (2 \pi - 1)} \cdot \frac{1}{\sqrt{k}}
  \cdot \frac{C}{1.806} (4 \pi^{2} e)^{k} (\log k + \log \log k)^{k-1} e^{-4 \pi^{2} k (\log k + \log \log k)},
\]
which can be rewritten as
\[
  \frac{C (1 + 2 \cdot 10^{-25})}{1.806 (2 \pi)^{3/2} (2 \pi - 1)}
  \cdot \frac{1}{\sqrt{k}} \cdot \frac{\left(4 \pi^{2} e\right)^{k} (\log k + \log \log k)^{k-1}}{(k \log k)^{4 \pi^{2} k}}.
\]
It is easy to see that the expression above is a decreasing function of $k$
and thus it is bounded by its value at $k = 84$, which is $\leq 2.534 \cdot 10^{-8294} C$.

We now compute an upper bound for the quantity
\[\sum_{n=s}^{\infty} 12 \sqrt{\log k} (C + 1.5^{k}) d(n) n^{(k-1)/2} e^{-2 \pi n /y}.\]
Using the facts that $1.5^{k} < 10^{-100} C$ for $k \geq 84$ and that $d(n) \leq 2 \sqrt{n}$, it suffices
to bound
\[
  24 (1 + 10^{-100}) \sqrt{\log k} \, C \sum_{n=s}^{\infty} n^{k/2} e^{-2 \pi n / y}.
\]
We apply Lemma~\ref{lemma3variation} with $\alpha = k/2$ and $\beta = -2\pi/y$, noting that the computation above showing that $s\beta < 1-k$ also shows that $s\beta < -k/2 = -\alpha$.  We obtain the upper bound
\begin{equation}
\label{cuspexp}
  24C \sqrt{\log k} (1 + 10^{-100}) s^{k/2} e^{-2 \pi s/y} \cdot \frac{1}{1 - e^{k/2s - 2 \pi / y}}.
\end{equation}
Performing a computation similar to that above, we find that $s^{k/2}e^{-2\pi s/y}$ is decreasing as a function of $s$, so we can bound it by replacing our choice of $s$ with $k^2(\log k+\log \log k)^2$.  Additionally, since $s > \frac{ky}{4 \pi}$ we may again use the fact that $\frac{1}{1 - e^{x}} < 1-\frac{1}{x}$ for $x < 0$ to obtain the bound
\[\frac{1}{1-e^{k/2s-2\pi/y}} \leq 1 - \frac{1}{\frac{k}{2s}-\frac{2\pi}{y}} =
y \cdot \left(\frac{2s + 4 \pi (s/y) - k}{4 \pi s - ky}\right) \leq y \left(\frac{2 + 4 \pi/y}{4 \pi - k(y/s)}\right).
\]
The quantity in parentheses above is a fraction whose numerator is a decreasing function of $k$, while the denominator is an increasing function of $k$. For $k = 84$,
\[
  \frac{2 + 4 \pi/y}{4 \pi - k(y/s)} < \frac{2}{4 \pi - 1}
\]
and thus for $k \geq 84$,
\[
\frac{1}{1-e^{k/2s-2\pi/y}} < \frac{2y}{4 \pi - 1}.
\]

Putting this all together, we obtain
an upper bound of
\begin{align*}
  &24.001 C \sqrt{\log k} (k (\log k + \log \log k))^{k}
  e^{-4 \pi^{2} k (\log k + \log \log k)} \cdot \frac{k (\log k + \log \log k)}{\pi(4 \pi-1)}\\
  &\leq 0.661C \sqrt{\log k} (k(\log k + \log \log k))^{k+1} e^{-4 \pi^{2} k (\log k + \log \log k)} \\
  &= 0.661C \sqrt{\log k} (k(\log k + \log \log k))^{k+1} \left(\frac{1}{k \log (k)}\right)^{4 \pi^{2} k}\\
  &= 0.661C k^{1 + k(1 - 4 \pi^{2})} (\log k)^{1/2 - 4 \pi^{2} k}
  (\log k)^{k+1} \left(1 + \frac{\log \log k}{\log k}\right)^{k+1}.
\end{align*}
Since $(1+x)^{k+1} \leq e^{(k+1)x}$ for $x > 0$, we set $x = (\log \log k)/(\log k)$ to obtain
\[
  \left(1 + \frac{\log \log k}{\log k}\right)^{k+1}
  \leq e^{\frac{(k+1) \log \log k}{\log k}} = (\log k)^{\frac{k+1}{\log k}}.
\]
This gives the bound
\begin{align*}
   & 0.661C k^{1 + k(1 - 4 \pi^{2})} (\log k)^{1/2 + \frac{k+1}{\log k} + k}\\
   &\leq 0.661C k^{1 + k(1 - 4 \pi^{2})} k^{3/2 + 2k} = 0.661C k^{5/2} k^{k(3 - 4 \pi^{2})}
\end{align*}
which is $\leq 1.91 \cdot 10^{-5892}C$ for $k \geq 84$.

Putting together these two bounds, we conclude that
\[
  \sum_{n=s}^{\infty} |a(n)| (e^{-2 \pi n/ y} - y^{k} e^{-2 \pi n y})
  \leq \sum_{n=s}^{\infty} |a(n)| e^{-2 \pi n/ y} < 2.534 \cdot 10^{-8294} C + 1.91 \cdot 10^{-5892} C < 0.00029C.
\]
This implies that
\[
  \sum_{n=\ell+1}^{s} a(n) (e^{-2 \pi n / y} - y^{k} e^{-2 \pi n y}) < 0
\]
and hence there is an integer $n \in [\ell+1,s]$ for which $a(n) < 0$.
\end{proof}

The next result shows that for weights $k \geq 92$, if enough of the initial coefficients of a form are nonnegative, then coefficients further out in the expansion are positive.

\begin{thm}
\label{92positivity}
If $f = 1 + \sum_{n=1}^{\infty} a(n) q^{n} \in M_{k}$ for $k \geq 92$ and $a(n) \geq 0$ for $n \leq k^{2} (\log k + \log \log k)^{2}$, then $a(n) > 0$ for $n \geq \frac{1}{7316} k^{4} (\log k + \log \log k)^{2}$.
\end{thm}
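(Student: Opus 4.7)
The plan is to decompose $f(z) = E_{k}(z) + \sum_{n=1}^{\infty} c(n) q^{n}$ into its Eisenstein and cusp-form parts, so that $a(n) = -\frac{2k}{B_{k}} \sigma_{k-1}(n) + c(n)$. Since $-\frac{2k}{B_{k}} \sigma_{k-1}(n) > 0$, it suffices to show the pointwise inequality $-\frac{2k}{B_{k}} \sigma_{k-1}(n) > |c(n)|$ for every $n \geq \frac{1}{7316} k^{4} (\log k + \log \log k)^{2}$. To control $|c(n)|$ via Lemma~\ref{smallC}, I first need an upper bound on $C = \sum_{m=1}^{\ell} a(m)$.

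The hypothesis that $a(n) \geq 0$ for all $n \leq k^{2}(\log k + \log \log k)^{2}$ precludes any negative coefficient of $f$ in the interval $[\ell+1, k^{2}(\log k + \log \log k)^{2}]$. The contrapositive of Proposition~\ref{boundonC}, whose requirement $k \geq 64$ is satisfied, then forces
\[
  C \leq 1.806 \left(\frac{k}{2\pi}\right)^{k} (\log k + \log \log k)^{k}.
\]
Substituting this into Lemma~\ref{smallC} and noting that $1.5^{k}$ is negligible compared to $C$ for $k \geq 92$, I obtain
\[
  |c(n)| \leq 22\,(1 + o(1))\sqrt{\log k} \left(\frac{k}{2\pi}\right)^{k} L^{k}\, d(n)\, n^{(k-1)/2},
\]
where $L = \log k + \log \log k$. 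Meanwhile, using $\sigma_{k-1}(n) \geq n^{k-1}$, the identity $-\frac{2k}{B_{k}} = \frac{(2\pi)^{k}}{(k-1)!\,\zeta(k)}$, and the Stirling bound~(\ref{facbound}) on $(k-1)!$ yields
\[
  -\frac{2k}{B_{k}} \sigma_{k-1}(n) \geq (1 - o(1)) \sqrt{\frac{k}{2\pi}} \left(\frac{2\pi e}{k}\right)^{k} n^{k-1}.
\]

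Combining these two bounds, applying $d(n) \leq 2\sqrt{n}$, and dividing through by $n^{(k-1)/2}$, the inequality $-\frac{2k}{B_{k}}\sigma_{k-1}(n) > |c(n)|$ reduces to a condition of the form
\[
  n^{(k-2)/2} > \Psi(k) \left(\frac{k^{2} L}{4 \pi^{2} e}\right)^{k},
\]
where $\Psi(k)$ is a sub-exponential factor absorbing $\sqrt{\log k}$, $\sqrt{2\pi/k}$, $\zeta(k)$, and the Stirling error. Taking $(k-2)/2$-th roots and writing $\frac{2k}{k-2} = 2 + \frac{4}{k-2}$, a sufficient condition for $a(n) > 0$ becomes
\[
  n > \Psi(k)^{2/(k-2)} \left(\frac{k^{2} L}{4 \pi^{2} e}\right)^{2} \left(\frac{k^{2} L}{4 \pi^{2} e}\right)^{4/(k-2)}.
\]
The leading factor is $k^{4} L^{2}/(4\pi^{2} e)^{2}$, and $(4\pi^{2}e)^{2} \approx 11515$, which leaves headroom over the target denominator $7316$.

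The main obstacle is to quantify all the sub-exponential corrections tightly enough so that the target threshold $n \geq k^{4} L^{2}/7316$ actually implies the sufficient condition above for every $k \geq 92$. The tightest case is $k = 92$, where both $\Psi(k)^{2/(k-2)}$ and the correction $(k^{2} L/(4\pi^{2} e))^{4/(k-2)}$ are largest; a direct numerical computation at $k = 92$ should show that their product stays safely below the available slack $11515/7316 \approx 1.57$. For $k > 92$ these correction factors decay, so an explicit verification at $k = 92$, combined with a monotonicity argument for $k > 92$, should complete the proof.
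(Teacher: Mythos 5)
Your proposal follows essentially the same route as the paper's proof: you decompose $f$ into Eisenstein plus cuspidal parts, obtain the bound $C \leq 1.806\,(k/2\pi)^k (\log k + \log\log k)^k$ from the contrapositive of Proposition~\ref{boundonC}, feed it into Lemma~\ref{smallC}, and reduce the positivity question to an inequality which, after taking $\frac{2}{k-2}$-th roots, gives a threshold of the form $\mathrm{const}\cdot k^4(\log k + \log\log k)^2$. Your identification of $(4\pi^2 e)^2\approx 11515$ as the dominant constant and the remark that the slack over $7316$ must absorb the sub-exponential correction factors (which are all decreasing and hence worst at $k=92$) is exactly how the paper finishes, with one small caveat: the paper's explicit bound of the cusp contribution carries $24 \cdot 1.806 \approx 43.3$ rather than your stated $22$, since the $d(n)\le 2\sqrt{n}$ doubling is absorbed into the same constant; and the paper also tracks the Robbins error term $e^{1/(12(k-1))}$ and $\zeta(k)^{2/(k-2)}$ in the $(-B_k/2k)^{2/(k-2)}$ factor, none of which alters the structure but which must be kept if one actually wants the constant $7316$ to come out.
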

\begin{proof}
Since $\mathcal{N}(f) > k^{2} (\log k + \log \log k)^{2}$, the hypotheses of Lemma~\ref{smallC} are satisfied, and
Proposition~\ref{boundonC} implies that
\[
  C = \sum_{n=1}^{\ell} a(n) \leq 1.806 \left(\frac{k}{2 \pi}\right)^{k} (\log k + \log \log k)^{k}.
\]

Combining the upper bound on $C$ with Lemma~\ref{smallC} gives
\[|c(n)| \leq 12 \left(1.806 \left(\frac{k}{2\pi}\right)^{k} (\log k + \log \log k)^{k} + 1.5^{k}\right) \sqrt{\log k} \, d(n) n^{\frac{k-1}{2}}. \]
The contribution to $a(n)$ from the cusp form part will be dominated by the contribution from the Eisenstein series coefficient when $n$ is large enough, which will certainly happen if
\begin{equation}\label{ToProve}
\frac{-2k}{B_k} \sigma_{k-1}(n) > 12 \left(1.806 \left(\frac{k}{2\pi}\right)^{k} (\log k + \log \log k)^{k} + 1.5^{k}\right) \sqrt{\log k} \, d(n) n^{\frac{k-1}{2}}.
\end{equation}
We know that $\sigma_{k-1}(n) > n^{k-1}$ for $n > 1$ and that $d(n) \leq 2 \sqrt{n}$, which means that equation~(\ref{ToProve}) follows when
\[
\frac{-2k}{B_k}n^{k-1} > 24 \left(1.806 \left(\frac{k}{2\pi}\right)^{k} (\log k + \log \log k)^{k} + 1.5^{k}\right) \sqrt{\log k} \, n^{\frac{k}{2}},
\]
or
\[n^{\frac{k-2}{2}} > \frac{-24 B_k}{2k} (1.806 (k/2\pi)^{k} (\log k + \log \log k)^{k} + 1.5^{k}).
\]
Raising both sides to the $\frac{2}{k-2}$ power and simplifying, we obtain
\[
n > (24)^{\frac{2}{k-2}} \left(\frac{-B_k}{2k}\right)^{\frac{2}{k-2}}  \left(1.806 \left(\frac{k}{2\pi}\right)^{k} (\log k + \log \log k)^{k} + 1.5^{k}\right)^{\frac{2}{k-2}}.
\]
We replace terms on the right side of this inequality with larger terms to obtain a simpler lower bound on $n$ in terms of $k$ that will still imply~(\ref{ToProve}).

For $k = 92$, the quantity $1.806 (k/2\pi)^{k} (\log k + \log \log k)^{k}$ is approximately $1.934 \cdot 10^{179}$, while $1.5^k$ is approximately $1.5863 \cdot 10^{16}$.  It follows that
\[(1.806+\epsilon) \left(\frac{k}{2\pi}\right)^{k} (\log k + \log \log k)^{k} \geq 1.806 \left(\frac{k}{2\pi}\right)^{k} (\log k + \log \log k)^{k} + 1.5^k\]
for some small $\epsilon < 10^{-163}$ when $k \geq 92$.  Thus, we need an upper bound for
\[\left(1.806+\epsilon\right)^{\frac{2}{k-2}} \left(\frac{k}{2\pi}\right)^{2+\frac{4}{k-2}} (\log k + \log \log k)^{2+{\frac{4}{k-2}}}.\]

We compute that $(1.806+\epsilon)^{\frac{2}{k-2}} \leq {1.01323}$, that $k^{\frac{4}{k-2}} (2\pi)^{-2-\frac{4}{k-2}} \leq 0.02854$, and that $(\log k + \log \log k)^{\frac{4}{k-2}} \leq 1.08314$, since all of these are decreasing functions of $k$.  Thus, an appropriate upper bound is
\[(1.01323)(0.02854)(1.08314)k^2 (\log k + \log \log k)^2 \leq 0.031322 k^2 (\log k + \log \log k)^2.\]

For $k \geq 92$, we have $(24)^{\frac{2}{k-2}} \leq 24^{1/45}$.  Additionally, we have $(-B_k/2k) = (k-1)! \zeta(k)/(2\pi)^k$, so the factorial bound in~(\ref{facbound}) gives
\[\left(\frac{(k-1)!\zeta(k)}{(2\pi)^k}\right)^{\frac{2}{k-2}} \leq \left(2\pi(k-1)\right)^{\frac{1}{k-2}} \left(\frac{k-1}{e}\right)^{2+\frac{2}{k-2}} e^{\frac{1}{12(k-1)}} \zeta(k)^{\frac{2}{k-2}} (2\pi)^{-2-\frac{4}{k-2}}.\]
Bounding each piece on the right hand side for $k \geq 92$ shows that this quantity is bounded above by $0.004065946 k^2$.

Putting this all together, we find that $a(n) > 0$ if $n \geq 0.00013667 k^4 (\log k + \log \log k)^2$, or $n \geq \frac{1}{{7316}} k^4  (\log k + \log \log k)^2$.
\end{proof}

We now prove Theorem~\ref{Akbounds}.
\begin{proof}[Proof of Theorem~\ref{Akbounds}]
First, the lower bound on $A(k)$ was proven in Section~\ref{lowerbound}.

Suppose that \[f = 1 + \sum_{n=1}^{\infty} a(n) q^{n} = E_{k} + \sum_{n=1}^{\infty} c(n) q^{n}\] is a modular form in $M_{k}$ for which $\mathcal{N}(f)$, the smallest integer $r$ for which $a(r) < 0$ if such an integer exists, is as large as possible, so that $\mathcal{N}(f) = A(k)$.  We consider several cases.

$\bullet$ If $k \geq 92$ and $\mathcal{N}(f) \leq k^{2} (\log k + \log \log k)^{2}$, then the theorem holds because $k^{2} (\log k + \log \log k)^{2} < \frac{1}{7316} k^{4} (\log k + \log \log k)^{2}$ for $k \geq 92$.

$\bullet$ If $k \geq 92$ and $\mathcal{N}(f) > k^{2} (\log k + \log \log k)^{2}$, then Theorem~\ref{92positivity} implies that
$a(n) > 0$ for $n \geq \frac{1}{7316} k^{4} (\log k + \log \log k)^{2}$ and hence $A(k) = \mathcal{N}(f) \leq \frac{1}{7316} k^{4} (\log k + \log \log k)^{2}$.

$\bullet$ For $12 \leq k \leq 88$, the value of $A(k)$ given in Table~\ref{table2} is less than
$\frac{1}{7316}k^4 (\log{k} + \log \log{k})^2$.
\end{proof}

Next we prove Theorem~\ref{positivity}.

\begin{proof}[Proof of Theorem~\ref{positivity}]
For $k \geq 92$, Theorem~\ref{92positivity} gives
the desired result. For $k < 92$, we check that the value of $t$ in Table~\ref{table2} is less than $k^{2} (\log k + \log \log k)^{2}$.  Thus, if $f = 1 + \sum_{n=1}^{\infty} a(n) q^{n}$ satisfies the hypothesis of the theorem, then $a(n) \geq 0$ for $n \leq \ell$ and $a(t) \geq 0$.  Since the argument establishing inequality~(\ref{C2bound}) does not depend on any coefficients other than $a(1), \ldots, a(\ell)$ and $a(t)$ being nonnegative, we conclude that $a(n) > 0$ when $n \geq B(k)$, and it is straightforward to check that $B(k) < \frac{1}{7316} k^{4} (\log k + \log \log k)^{2}$ for $12 < k \leq 88$.
For $k = 12$, the calculations in Section~\ref{smallk} show
that if $a(1)$ and $a(2)$ are both nonnegative, then
$a(n) > 0$ for $n \geq 3$.
\end{proof}

Finally, we prove Theorem~\ref{upperbound}.

\begin{proof}[Proof of Theorem~\ref{upperbound}]
For $k \geq 84$, if the Fourier coefficients of a form $f(z)$ are nonnegative, then Proposition~\ref{boundonC} gives an upper bound on $C$.  Using this bound in Lemma~\ref{smallC} shows that if
$f = E_{k}(z) + \sum c(n) q^{n}$, then
\[
  |c(n)| \leq 12 \left(1.806 \left(\frac{k}{2\pi}\right)^{k} (\log k + \log \log k)^{k} + 1.5^{k}\right) \sqrt{\log k} \, d(n) n^{\frac{k-1}{2}}.
\]
It is straightforward to see that the function
\[
  \frac{\left(1.806 \left(\frac{k}{2\pi}\right)^{k} (\log k + \log \log k)^{k} + 1.5^{k}\right)\sqrt{\log k}}{\left(\frac{k}{2\pi}\right)^{k} (\log k)^{1.5k}}
\]
is decreasing and less than the constant stated in Theorem~\ref{upperbound} for $k \geq 84$.

For $20 \leq k \leq 84$, the bound $C_{2}$ on the cusp form part of $a(n)$ given in Table~\ref{table2} is less than
the bound stated in the theorem and so the result holds for those cases. For $k = 16$, the analysis in Section~\ref{smallk} shows that if $f = 1 + \sum_{n=1}^{\infty} a(n) q^{n}$ has all nonnegative coefficients, then $a(1) \leq 19338$, which implies that $C_{2} \leq 19333.5$, which is less
than the bound required by the theorem, namely $1.4245 \cdot 10^{7}$.
\end{proof}

\bibliographystyle{plain}
\bibliography{positive}

\end{document}